\documentclass[11pt]{article}
\usepackage{amsfonts,amsmath,amsthm,amssymb,enumerate,color}
\usepackage[makeroom]{cancel}
\usepackage{authblk}
\usepackage{comment}
\usepackage[shortlabels]{enumitem}
\setlist[enumerate,1]{wide, labelindent=0pt,label={\upshape(\roman*)}}

\numberwithin{equation}{section}
\newcommand{\beq}{\begin{equation}}
	\newcommand{\eeq}{\end{equation}}
\newcommand{\bea}{\begin{eqnarray}}
	\newcommand{\eea}{\end{eqnarray}}
\newcommand{\beas}{\begin{eqnarray*}}
	\newcommand{\eeas}{\end{eqnarray*}}

\newtheorem{theorem}{Theorem}[section]

\newtheorem{definition}[theorem]{Definition}
\newtheorem{proposition}[theorem]{Proposition}
\newtheorem{prop}[theorem]{Proposition}
\newtheorem{corollary}[theorem]{Corollary}
\newtheorem{lemma}[theorem]{Lemma}
\newtheorem{remark}[theorem]{Remark}
\newtheorem{example}[theorem]{Example}
\newtheorem{examples}[theorem]{Examples}
\newtheorem{foo}[theorem]{Remarks}
\newtheorem{hypothesis}[theorem]{Hypothesis}

\newenvironment{Remark}{\begin{remark}\rm}{\end{remark}}

\title{Propagation of Chaos on Riemannian Manifolds}

\author[1]{Myeongju Chae\thanks{\texttt{mchae@hknu.ac.kr}}}
\author[2]{Gunhee Cho\thanks{\texttt{wvx17@txstate.edu}}}
\author[2]{Hyun Chul Jang\thanks{\texttt{hcjang@txstate.edu}}}
\author[3]{Guang Yang\thanks{\texttt{yangg7@sustech.edu.cn}}}

\affil[1]{Department of Applied Mathematics,
	Hankyong National University,
	Anseong, Republic of Korea}

\affil[2]{Department of Mathematics,
	Texas State University,
	San Marcos, TX, USA}

\affil[3]{Department of Mathematics,
	Southern University of Science and Technology,
	Shenzhen, China}

\date{}

\begin{document}
	
	\maketitle
	
\begin{abstract}
We extend reflection coupling for uniform-in-time propagation of chaos
from Euclidean space to complete Riemannian manifolds. Intrinsic
reflection along minimizing geodesics replaces the Euclidean difference
process, and the associated endpoint-index term quantifies the competition
between curvature, confinement, and interaction. Under an endpoint-index
bound and a radial Laplacian bound, we establish global well-posedness,
uniform moment estimates, and uniform-in-time propagation of chaos with
rate \(N^{-1/2}\) in a modified Wasserstein distance. We further obtain
exponential contraction of the nonlinear McKean--Vlasov semigroup,
uniqueness of its invariant probability measure in \(\mathcal P_2(M)\), and exponential
convergence to equilibrium. Although a Ricci lower bound is an important
special case, our framework also includes a curvature-spike class with
\(\inf_M\operatorname{Ric}=-\infty\).
\end{abstract}

	\tableofcontents

\section{Introduction}

McKean--Vlasov diffusions arise as mean-field limits of weakly interacting
particle systems as the number of particles tends to infinity. Propagation of
chaos describes the corresponding asymptotic independence of finitely many
particles. In Euclidean space, propagation of chaos for regular interactions,
such as globally Lipschitz forces, is classical; see
\cite{McKean1967,Sznitman1991,Meleard1996,Malrieu2003}. In particular,
the classical coupling argument gives quantitative estimates on any fixed
time interval. We refer to
\cite{ChaintronDiez2022,ChaintronDiezII} for detailed reviews of the
models, methods, and applications of propagation of chaos.

Uniform-in-time propagation of chaos requires estimates whose constants
remain bounded in time.
One approach is to  obtain the necessary dissipation from convexity assumptions on the
confinement and interaction potentials, using synchronous coupling or
functional inequalities such as logarithmic Sobolev inequalities; see
Malrieu~\cite{Malrieu2001} and Cattiaux, Guillin, and
Malrieu~\cite{CattiauxGuillinMalrieu2008}. A different approach is based
on reflection coupling. Reflection coupling was introduced by Lindvall and
Rogers~\cite{LindvallRogers1986} to construct successful couplings of
multidimensional diffusions. 
Eberle later employed this coupling to obtain quantitative Wasserstein contraction without global convexity~\cite{Eberle2016}. 
The approach was subsequently applied to
McKean--Vlasov processes and uniform-in-time propagation of chaos
\cite{EberleGuillinZimmer2019,DurmusEberleGuillinZimmer2020}.

In this paper, we extend the reflection-coupling approach to a complete
connected noncompact \(n\)-dimensional Riemannian manifold \((M,g)\). Let \(V\in C^2(M)\)
be a confinement potential and \(W\in C^2(M\times M)\) an interaction
potential. We consider the Vlasov--McKean equation
\begin{equation}\label{eq:intro-VM}
\partial_t\mu_t
=
\Delta_g\mu_t
+
\operatorname{div}_g\bigl((\nabla V+\mathbf F_{\mu_t})\mu_t\bigr),
\qquad
\mathbf F_\mu(x)
=
\int_M \nabla_g^{(1)}W(x,y)\,\mu(dy).
\end{equation}
The corresponding \(N\)-particle system is
\begin{equation}\label{eq:intro-particle}
dX_t^{i,N}
=
-\nabla V(X_t^{i,N})\,dt
-\frac1N\sum_{j=1}^N
\nabla_g^{(1)}W(X_t^{i,N},X_t^{j,N})\,dt
+\sqrt{2}\,dB_t^{i,M},
\qquad i=1,\ldots,N,
\end{equation}
where \(B^{1,M},\ldots,B^{N,M}\) are independent Brownian motions on \(M\).
The precise formulation of the Brownian motions and the stochastic
equations on \(M\) is given in Section~\ref{sec:preliminaries}, and the
assumptions are stated in Hypotheses~3.1 and~3.2. We impose regularity and
Lipschitz-type conditions on the interaction force, while the confinement
is required to be dissipative only at large distances rather than globally
geodesically convex.

Our geometric coupling construction follows the work of
Kendall~\cite{Kendall1986} and Cranston~\cite{Cranston1991} on reflection
coupling on Riemannian manifolds.  The 
distance formula \eqref{eq:distance-reflection-identity} entails both  the confinement and the geometry.
Through the assumptions in Hypothesis 3.1, the confinement drift tends to contract the distance between the
coupled processes, whereas negative curvature contributes an expansive
term through the second variation of distance. 
More precisely, the geometric contribution is expressed by a sum of index
forms along the minimizing geodesic joining the coupled particles.
For a unit-speed minimizing geodesic \(\gamma:[0,r]\to M\), write
\(\mathcal I(\gamma)\) for the corresponding endpoint-index term. If
$$
\mathcal I(\gamma)\leq H(r)
$$ for a function $H$
and the confinement contribution is bounded below by \(rC_V(r)\), then
geometry and confinement enter the radial distance estimate through the
combination
$$
D(r):=rC_V(r)-H(r)
$$
(see the two-point distance estimate \eqref{eq:f-rho}).
Hypothesis~\ref{hyp:geometry-confinement}\textup{(ii)} requires
$$
\liminf_{r\to\infty}\frac{D(r)}r>0.
$$
This large-distance
condition allows us to construct an increasing concave function \(f\)
satisfying

$$
4f''(r)-D(r)f'(r)\leq -C_f f(r)
$$
for some \(C_f>0\). Thus the concavity of the transportation cost compensates
for possible short-range expansion, while the confinement dominates the
geometric contribution at large distances.

A uniform Ricci lower bound
\(\operatorname{Ric}_g\geq-(n-1)k^2g\) is a convenient sufficient
condition, since it yields
$$
\mathcal I(\gamma)\leq(n-1)k^2r,
$$
but our argument only requires an upper bound on the index term itself.\\
\indent
Evolutionary McKean--Vlasov equations and their particle approximations on
manifolds have been studied before. In particular, Arnaudon and Del
Moral~\cite{ArnaudonDelMoral2019} obtained stability and uniform
propagation-of-chaos estimates using variational methods and synchronous
couplings. Their manifold propagation-of-chaos result, Theorem~4.3 of
\cite{ArnaudonDelMoral2019}, assumes a uniform Ricci lower bound in
addition to a dissipativity condition. Here we formulate the geometric
assumption directly in terms of the second variation of distance.

Appendix~\ref{app:curvature-spikes} shows that these assumptions genuinely
allow manifolds without a uniform Ricci lower bound. We construct a
complete Cartan--Hadamard surface with negative curvature concentrated
in thin annuli, and extend the example to higher dimensions by taking
products with Euclidean space. Its Ricci curvature is unbounded below,
but the endpoint-index term satisfies
\(\mathcal I(\gamma)\leq C\min\{r^{1/9},1\}\) and $D(r)= \alpha r-C\min\{r^{1/9}, 1\}$ for some $\alpha>0$.
Besides, a quadratic confinement potential
and smooth interactions built from compactly supported functions satisfy
the remaining assumptions when the interaction strength is sufficiently
small. 


We now state the main quantitative consequences. Under
Hypotheses~\ref{hyp:geometry-confinement} and~\ref{hyp:interaction}, the
coupling construction yields an increasing concave function \(f\),
equivalent to the geodesic distance. Let \(W_f\) denote the corresponding
Wasserstein distance. If \(\mu_t\) is the nonlinear law flow and the
particles are initially independent with common law
\(\nu_0\in\mathcal P_2(M)\), Theorem~\ref{thm:propagation-chaos} gives,
for some \(\lambda,C>0\),
\begin{equation}\label{eq:intro-chaos}
W_{\ell^1(f)}\left(
\mathcal L(X_t^{1,N},\ldots,X_t^{N,N}),\mu_t^{\otimes N}
\right)
\leq
e^{-\lambda t}W_f(\nu_0,\mu_0)+\frac{C}{\sqrt N},
\qquad t\geq0,
\end{equation}
where \(C\) is independent of \(t\) and \(N\). In particular, for a common
initial law this gives an \(N^{-1/2}\) propagation-of-chaos estimate
uniformly in time. Theorem~\ref{thm:convergence-equilibrium} also gives
the contraction estimate
$$
W_f(\mu_t,\nu_t)
\leq e^{-\lambda t}W_f(\mu_0,\nu_0),
$$
and Theorem~\ref{thm:invariant} yields a unique invariant law in
\(\mathcal P_2(M)\) and
exponential convergence to equilibrium. Since \(f\) is equivalent to the
geodesic distance, the corresponding conclusions also hold in \(W_1\).

The regularity assumptions on the interaction force are essential to our
coupling argument. Singular interactions require different methods.
Jabin and Wang~\cite{JabinWang2018} developed a relative-entropy approach
for singular kernels, including the two-dimensional Biot--Savart kernel,
and Guillin, Le Bris, and Monmarch\'e
~\cite{GuillinLeBrisMonmarche2021} used this approach to obtain
uniform-in-time propagation of chaos for the two-dimensional vortex model
and related systems. Rosenzweig and Serfaty
~\cite{RosenzweigSerfaty2023} obtained global-in-time mean-field
convergence for singular Riesz-type diffusive flows by a modulated-energy
method. Extending such methods to the present geometric setting is a
natural direction for future work.

Another possible direction is to investigate a manifold analogue of the
bi-coupling method of Ren and Wang~\cite{RenWang2023}, which yields
entropy estimates for McKean--Vlasov diffusions, and its application to
particle approximations on manifolds.

The paper is organized as follows.
Section~\ref{sec:preliminaries} introduces the stochastic and geometric
preliminaries. Section~\ref{sec:assumptions} states the hypotheses and
gives examples. Sections~\ref{sec:well-posedness}, \ref{sec:chaos}, and
\ref{sec:equilibrium} prove, respectively, global well-posedness with
uniform moment bounds, propagation of chaos, and contraction with
convergence to equilibrium. Appendix~\ref{app:mixed-hessian} contains
the mixed-Hessian estimate, and Appendix~\ref{app:curvature-spikes}
contains the curvature-spike construction and the verification of its
assumptions.

\section*{Acknowledgments}
M. Chae was supported by the National Research Foundation of Korea
(NRF) grant funded by the Korea government (No.~RS-2023-00279920), and
G. Yang was supported by NSFC grant No. 12601263.

\section{Preliminaries}\label{sec:preliminaries}

We use standard results on geodesics, index forms, and comparison geometry
from \cite{Petersen2016}, including the radial Laplacian estimate of
\cite{ChengYau1975}. Manifold Brownian motion and SDEs follow
\cite{Elworthy1982,Hsu2002,Emery1989}. For reflection coupling and its
modified-cost formulation, see
\cite{LindvallRogers1986,Kendall1986,Cranston1991,Eberle2016}.
We use the coupling formulation of Wasserstein distances from
\cite{Villani2009,AmbrosioGigliSavare2008}; background on propagation of
chaos is given in
\cite{McKean1967,Sznitman1991,ChaintronDiez2022,
DurmusEberleGuillinZimmer2020}.

Let \((M,g)\) be a complete connected \(n\)-dimensional Riemannian manifold with distance
\(\rho\). For \(y\notin\operatorname{Cut}(x)\cup\{x\}\), set
\(r=\rho(x,y)\) and let \(\gamma\) be the unique unit-speed minimizing
geodesic from \(x\) to \(y\). We write
\[
\gamma:[0,r]\to M,\quad \gamma(0)=x,\quad \gamma(r)=y,
\qquad
\gamma'(x):=\dot\gamma(0),\quad \gamma'(y):=\dot\gamma(r).
\]

\begin{proposition}[First variation formula; see \cite{Petersen2016}]
	\label{prop:first-variation}
	In the preceding notation,
	\[
	\nabla_x\rho(x,y)=-\gamma'(x),
	\qquad
	\nabla_y\rho(x,y)=\gamma'(y).
	\]
\end{proposition}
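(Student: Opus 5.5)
The plan is the classical first-variation argument, and by the symmetry $\rho(x,y)=\rho(y,x)$ it suffices to prove the formula for $\nabla_y\rho(x,y)$ with $x$ fixed. Reversing the geodesic, i.e.\ using $\tilde\gamma(s)=\gamma(r-s)$ from $y$ to $x$, then yields $\nabla_x\rho(x,y)=\dot{\tilde\gamma}(r)=-\dot\gamma(0)=-\gamma'(x)$.

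The first step is smoothness. Since $y\notin\operatorname{Cut}(x)\cup\{x\}$, the point $y$ lies in the interior of the injectivity domain of $x$. Hence $\exp_x$ is a diffeomorphism from a neighborhood of $r\dot\gamma(0)$ in $T_xM$ onto a neighborhood $U$ of $y$, and on $U$ we have $\rho(x,z)=|\exp_x^{-1}(z)|$. This function is smooth near $y$ because $\exp_x^{-1}(y)\neq0$.

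Next, fix $v\in T_yM$ and build a smooth variation $\gamma_s:[0,r]\to M$ of geodesics with $\gamma_0=\gamma$, $\gamma_s(0)=x$, and $\gamma_s(r)=\sigma(s)$, where $\sigma$ is a curve with $\sigma(0)=y$ and $\sigma'(0)=v$. Concretely, take $\gamma_s(t)=\exp_x\!\bigl(\tfrac tr\exp_x^{-1}\sigma(s)\bigr)$. Each $\gamma_s$ is the unique minimizing geodesic to $\sigma(s)$, so $\rho(x,\sigma(s))=L(\gamma_s)$. Write $E(s)=\int_0^r|\partial_t\gamma_s|\,dt$. Differentiating under the integral and using $|\dot\gamma|=1$, the symmetry lemma $\nabla_{\partial_s}\partial_t=\nabla_{\partial_t}\partial_s$, and integration by parts give
\[
E'(0)=\bigl[\langle \partial_s\gamma_s,\dot\gamma\rangle\bigr]_{t=0}^{t=r}-\int_0^r\langle \partial_s\gamma_s,\nabla_{\dot\gamma}\dot\gamma\rangle\,dt=\langle v,\dot\gamma(r)\rangle .
\]
This uses $\nabla_{\dot\gamma}\dot\gamma=0$ and the fact that the variation field vanishes at $t=0$. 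Therefore $d_y\rho(x,\cdot)(v)=\langle\dot\gamma(r),v\rangle$ for every $v$, which is exactly $\nabla_y\rho=\gamma'(y)$.

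The only delicate point is justifying differentiability of $\rho$ at $y$, that is, that the minimizing-length function agrees with the smooth function $|\exp_x^{-1}|$ near $y$. This follows from the characterization of the cut locus: the complement of $\operatorname{Cut}(x)\cup\{x\}$ is the diffeomorphic image under $\exp_x$ of an open star-shaped domain minus the origin. With this in hand, the rest is routine. Alternatively, one may invoke the Gauss lemma directly: $\exp_x^{-1}$ maps radial geodesics to rays, so $\nabla\rho(x,\cdot)$ is the unit radial field, which at $y$ equals $\dot\gamma(r)$.
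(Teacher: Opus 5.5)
Your proof is correct. It is the standard first-variation (equivalently, Gauss-lemma) argument that the paper does not reproduce and simply cites from Petersen.

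One ingredient is left unstated. Your symmetry reduction for $\nabla_x\rho(x,y)$ needs $x\notin\operatorname{Cut}(y)$, and this holds by the standard symmetry of the cut locus: $y\in\operatorname{Cut}(x)$ if and only if $x\in\operatorname{Cut}(y)$.
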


We first apply Proposition~\ref{prop:first-variation} where \(\rho\) is
smooth and then extend the resulting identities by localization and the
smooth coupling approximation introduced below; see
\cite{Hsu2002,Emery1989}.

\subsection*{Horizontal lifts and Brownian motion on a manifold}

For \(x\in M\), let \(O_x(M)\) be the set of linear isometries
\(u:\mathbb R^n\to T_xM\). The orthonormal frame bundle and its projection
are
\[
O(M):=\bigcup_{x\in M}O_x(M),
\qquad
\pi:O(M)\to M,\quad \pi(u)=x\ \text{for }u\in O_x(M).
\]

A frame \(u\in O_x(M)\) converts Euclidean noise into tangent noise
through the isometry \(v\mapsto uv\in T_xM\), allowing an
\(\mathbb R^n\)-valued Brownian motion to drive a diffusion on \(M\).

\begin{definition}[Horizontal curves and spaces;
see \cite{Elworthy1982,Hsu2002,Emery1989}]
	\label{def:horizontal}
	A smooth curve \(u_t\in O(M)\), with \(x_t=\pi(u_t)\), is horizontal if
	\[
	\nabla_{\dot x_t}(u_t e_i)=0,\qquad i=1,\ldots,n.
	\]
	The velocities of horizontal curves through \(u\) form the horizontal
	subspace \(\mathcal H_u\subset T_uO(M)\).
\end{definition}

The vertical space and the connection-induced splitting are
\[
\mathcal V_u:=\ker(d\pi_u),
\qquad
T_uO(M)=\mathcal H_u\oplus\mathcal V_u.
\]
Vertical directions change the frame at a fixed base point, whereas
horizontal directions move the base point while parallel transporting
the frame.

For the standard basis \(e_1,\ldots,e_n\) of \(\mathbb R^n\), let \(H_i\)
be the canonical horizontal vector field characterized by
\(H_i(u)\in\mathcal H_u\) and \(d\pi_u(H_i(u))=ue_i\). For
\(a=(a^1,\ldots,a^n)\), set
\[
H_a(u):=\sum_{i=1}^n a^iH_i(u),
\qquad
d\pi_u(H_a(u))=ua.
\]

\begin{definition}[Horizontal lift of a vector field; see \cite{Elworthy1982,Hsu2002}]
	\label{def:horizontal-lift}
	Let \(b\) be a vector field on \(M\). Its horizontal lift is the unique
	horizontal vector field \(\widetilde b\) on \(O(M)\) satisfying
	\[
	d\pi_u(\widetilde b(u))=b(\pi(u)).
	\]
\end{definition}

Let \(W=(W^1,\ldots,W^n)\) be a standard Brownian motion in
\(\mathbb R^n\), and let \(U_t\) solve
\begin{equation}\label{eq:horizontal-BM}
dU_t=\sqrt2\sum_{i=1}^n H_i(U_t)\circ dW_t^i.
\end{equation}
Then \(B_t^M:=\pi(U_t)\) is Brownian motion on \(M\) with generator
\(\Delta_g\); see \cite{Elworthy1982,Hsu2002,Emery1989}. Throughout,
\(dB_t^M\) denotes this projected horizontal noise, not a differential
taking values in a fixed Euclidean space.

For a smooth vector field \(b\) with horizontal lift \(\widetilde b\),
the diffusion with drift \(b\) is the projection \(X_t=\pi(U_t)\) of
\begin{equation}\label{eq:lifted-SDE-drift}
dU_t=\widetilde b(U_t)\,dt
+\sqrt2\sum_{i=1}^nH_i(U_t)\circ dW_t^i.
\end{equation}
In local charts of \(O(M)\), this is a Euclidean Stratonovich SDE with
locally Lipschitz coefficients. Standard local theory therefore gives a
unique maximal strong solution up to its lifetime; see
\cite{Emery1989,Elworthy1982,Hsu2002}.
We call \(X_t=\pi(U_t)\) the strong solution of
\(dX_t=b(X_t)\,dt+\sqrt2\,dB_t^M\). Its horizontal-development and
martingale formulations are equivalent; see
\cite[Proposition~3.2.1]{Hsu2002}.

\begin{proposition}[Generator formula;
see \cite{Elworthy1982,Hsu2002,Emery1989}]
	\label{prop:generator-formula}
	Let \(U_t\) solve \eqref{eq:lifted-SDE-drift} and \(X_t=\pi(U_t)\).
	For every \(\varphi\in C_c^\infty(M)\),
	\[
	\begin{aligned}
	L_b\varphi&=\Delta_g\varphi+\langle b,\nabla\varphi\rangle,\\
	d\varphi(X_t)&=(L_b\varphi)(X_t)\,dt
	+\sqrt2\sum_{i=1}^n(U_te_i)\varphi(X_t)\,dW_t^i.
	\end{aligned}
	\]
\end{proposition}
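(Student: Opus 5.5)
The plan is to apply the Itô (Stratonovich) chain rule on the frame bundle to the lifted function \(\tilde\varphi:=\varphi\circ\pi\in C^\infty(O(M))\), and then push everything down to \(M\) using the defining properties of the horizontal fields. Since \(U_t\) solves the Stratonovich SDE \eqref{eq:lifted-SDE-drift}, the Stratonovich chain rule gives
\[
d\tilde\varphi(U_t)=(\widetilde b\tilde\varphi)(U_t)\,dt+\sqrt2\sum_{i=1}^n(H_i\tilde\varphi)(U_t)\circ dW_t^i .
\]
The first step identifies the derivatives of \(\tilde\varphi\) along horizontal fields. Because \(d\pi_u(\widetilde b(u))=b(\pi(u))\) and \(d\pi_u(H_i(u))=ue_i\), we get \((\widetilde b\tilde\varphi)(u)=\langle b,\nabla\varphi\rangle(\pi u)\) and \((H_i\tilde\varphi)(u)=(ue_i)\varphi(\pi u)\).

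The second step converts the Stratonovich integral to Itô form. The correction term is
\[
\tfrac12\, d\bigl\langle \sqrt2 (H_i\tilde\varphi)(U),W^i\bigr\rangle_t .
\]
Applying the chain rule once more to \(H_i\tilde\varphi\), its martingale part is \(\sqrt2\sum_j H_jH_i\tilde\varphi\,dW^j\). The correction is therefore
\[
\tfrac12\cdot 2\sum_i (H_iH_i\tilde\varphi)(U_t)\,dt=\sum_i (H_i^2\tilde\varphi)(U_t)\,dt .
\]

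The key geometric fact, and the main obstacle, is the identity \(\sum_i H_i^2\tilde\varphi(u)=(\Delta_g\varphi)(\pi u)\), i.e. Bochner's horizontal Laplacian projects to the Laplace–Beltrami operator. I would prove it as follows. Take the horizontal curve \(u_s\) through \(u\) with \(\dot u_s=H_i(u_s)\). Its projection \(x_s\) satisfies \(\dot x_s=u_se_i\), and \(u_se_i\) is parallel along \(x_s\) by Definition~\ref{def:horizontal}. Hence \(x_s\) is the geodesic with initial velocity \(ue_i\). Then \(H_i^2\tilde\varphi(u)=\frac{d^2}{ds^2}\varphi(x_s)|_{s=0}=\nabla^2\varphi(ue_i,ue_i)\), and summing over the orthonormal frame \(\{ue_i\}\) gives the trace of the Hessian, which is \(\Delta_g\varphi\).

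Combining the three steps yields \(d\varphi(X_t)=(\Delta_g\varphi+\langle b,\nabla\varphi\rangle)(X_t)\,dt+\sqrt2\sum_i(U_te_i)\varphi(X_t)\,dW_t^i\), which is both the generator formula and the displayed semimartingale decomposition. All of this holds up to the lifetime of \(U_t\). Compact support of \(\varphi\) makes every coefficient bounded, so after localization by exit times from compacts the stochastic integral is a genuine local martingale, and the identity holds on \([0,\zeta)\).
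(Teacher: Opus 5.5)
Your proposal is correct. It is essentially the standard argument behind the paper's citation; the paper gives no proof of its own and defers to Elworthy, Hsu and \'Emery. That argument applies It\^o's formula to \(\varphi\circ\pi\) on \(O(M)\), extracts the Stratonovich correction \(\sum_i H_i^2(\varphi\circ\pi)\,dt\), and identifies \(\sum_i H_i^2(\varphi\circ\pi)=(\Delta_g\varphi)\circ\pi\) by noting that integral curves of \(H_i\) project to geodesics, exactly as in Hsu's treatment of Bochner's horizontal Laplacian. Your bookkeeping of the two factors of \(\sqrt2\), which yields \(\Delta_g\) rather than \(\tfrac12\Delta_g\), is right. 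So is the localization remark, since \(\pi^{-1}(\operatorname{supp}\varphi)\) is compact.
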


\subsection*{The McKean--Vlasov SDE and the particle system}

For \(\mu\in\mathcal P_1(M)\), define
\[
\mathbf F_\mu(x):=\int_M\nabla_g^{(1)}W(x,y)\,\mu(dy),
\qquad
b_\mu(x):=-\nabla V(x)-\mathbf F_\mu(x),
\]
where \(V\) and \(W\) satisfy
Hypotheses~\ref{hyp:geometry-confinement} and~\ref{hyp:interaction}.
The McKean--Vlasov SDE is
\begin{equation}\label{eq:McKean-Vlasov-SDE}
d\bar X_t=b_{\mu_t}(\bar X_t)\,dt+\sqrt2\,dB_t^M,
\qquad \mu_t=\mathcal L(\bar X_t).
\end{equation}

The corresponding \(N\)-particle system is
\begin{equation}\label{eq:particle-system}
\begin{aligned}
dX_t^{i,N}
&=-\nabla V(X_t^{i,N})\,dt
-\frac1N\sum_{j=1}^N\nabla_g^{(1)}W(X_t^{i,N},X_t^{j,N})\,dt\\
&\quad+\sqrt2\,dB_t^{i,M},\qquad i=1,\ldots,N,
\end{aligned}
\end{equation}
where \(B^{1,M},\ldots,B^{N,M}\) are independent Brownian motions on \(M\).

Given the law flow \(\mu_t=\mathcal L(\bar X_t)\), let
\(\bar X^1,\ldots,\bar X^N\) be independent nonlinear copies satisfying
\begin{equation}\label{eq:nonlinear-copies}
\begin{aligned}
d\bar X_t^i
&=-\nabla V(\bar X_t^i)\,dt-\mathbf F_{\mu_t}(\bar X_t^i)\,dt
+\sqrt2\,d\bar B_t^{i,M},\\
\mathcal L(\bar X_t^i)&=\mu_t,\qquad i=1,\ldots,N.
\end{aligned}
\end{equation}
In the proof of Theorem~\ref{thm:propagation-chaos}, each pair
\((\bar B^{i,M},B^{i,M})\) is joined by the mixed coupling of
Definition~\ref{def:mixed} to control
\(\rho(\bar X_t^i,X_t^{i,N})\).

\begin{proposition}[Weak Fokker--Planck equation;
see \cite{Hsu2002,AmbrosioGigliSavare2008}]
	\label{prop:FP-weak}
	If \(\bar X_t\) solves \eqref{eq:McKean-Vlasov-SDE} and
	\(\mu_t=\mathcal L(\bar X_t)\), then, for every
	\(\varphi\in C_c^\infty(M)\),
	\[
	\frac{d}{dt}\int_M\varphi\,d\mu_t
	=
	\int_M\!\left[\Delta_g\varphi
	-\langle\nabla V+\mathbf F_{\mu_t},\nabla\varphi\rangle\right]d\mu_t.
	\]
	Equivalently, in the distributional sense,
	\[
	\partial_t\mu_t
	=\Delta_g\mu_t+\nabla\!\cdot\!\bigl((\nabla V+\mathbf F_{\mu_t})\mu_t\bigr).
	\]
\end{proposition}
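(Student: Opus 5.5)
The plan is to apply the generator formula of Proposition~\ref{prop:generator-formula} with the time-dependent drift \(b_t:=b_{\mu_t}\) to a test function \(\varphi\in C_c^\infty(M)\), and then take expectations. Here \(\mu_t\) is treated as a fixed, deterministic law flow, so \(\bar X_t\) is just a time-inhomogeneous diffusion. The generator formula is stated for autonomous drifts. I would extend it by adjoining time as an extra coordinate, or equivalently by running the same Itô computation on the horizontal lift \eqref{eq:lifted-SDE-drift} with \(\widetilde b_t\) in place of \(\widetilde b\). Either way one obtains
\[
\varphi(\bar X_t)-\varphi(\bar X_0)=\int_0^t\bigl[\Delta_g\varphi+\langle b_{\mu_s},\nabla\varphi\rangle\bigr](\bar X_s)\,ds+\sqrt2\int_0^t\sum_{i=1}^n(U_se_i)\varphi(\bar X_s)\,dW_s^i .
\]
This identity holds up to the lifetime of \(\bar X\). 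The global well-posedness needed to drop the lifetime is part of the hypothesis "\(\bar X_t\) solves \eqref{eq:McKean-Vlasov-SDE}", which I read as a global solution.

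Next I take expectations. The integrand of the stochastic integral is bounded, since \(|(U_se_i)\varphi|\le|\nabla\varphi|\) and \(\nabla\varphi\) is compactly supported. The stochastic integral is therefore a true martingale with mean zero. For the drift term, \(\Delta_g\varphi\) is bounded, and \(\langle b_{\mu_s},\nabla\varphi\rangle\) vanishes outside \(\operatorname{supp}\varphi\). On that compact set, \(\nabla V\) is bounded by continuity. The force \(\mathbf F_{\mu_s}\) is also bounded there, locally uniformly in \(s\), because the Lipschitz-type bounds in Hypothesis~\ref{hyp:interaction} give \(|\nabla^{(1)}_gW(x,y)|\le C(1+\rho(x_0,y))\) for \(x\) in a compact set, and we need \(\sup_{s\le t}\int\rho(x_0,y)\,\mu_s(dy)<\infty\). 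This moment bound is the one point that needs care, and I would take it from the moment estimates of the well-posedness theory, or simply assume \(\mu_\cdot\in C([0,T];\mathcal P_1)\). With the integrand bounded on \([0,t]\), Fubini yields the integrated form
\[
\int\varphi\,d\mu_t-\int\varphi\,d\mu_0=\int_0^t\int_M\bigl[\Delta_g\varphi-\langle\nabla V+\mathbf F_{\mu_s},\nabla\varphi\rangle\bigr]d\mu_s\,ds .
\]

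The time integrand is continuous in \(s\). This follows because \(\bar X\) has continuous paths, so \(s\mapsto\mu_s\) is weakly continuous. In addition, \(s\mapsto\mathbf F_{\mu_s}(x)\) is continuous, uniformly on \(\operatorname{supp}\varphi\), by dominated convergence under the moment bound. Differentiating the integrated form in \(t\) then gives the stated identity. The distributional form follows by integrating by parts against the Riemannian volume, and that step is purely a matter of notation. The main obstacle is the integrability and continuity of \(\mathbf F_{\mu_s}\) on compacts; everything else is the standard martingale argument.
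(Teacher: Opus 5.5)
Your argument is correct and is the standard martingale argument the paper relies on; the paper gives no proof of its own and only cites Hsu and Ambrosio--Gigli--Savar\'e. The steps are It\^o's formula for \(\varphi(\bar X_t)\), a mean-zero martingale term with bounded integrand, Fubini, and differentiation in \(t\).

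For the continuity step, a bare \(\sup_s\) first-moment bound does not by itself give a dominating function. It is cleaner to use two facts from the paper. First, Hypothesis~\ref{hyp:interaction}(i),(iii) give \(|\nabla_g^{(1)}W(x,y)|\le\eta\rho(x,y)\). Second, Lemma~\ref{lem:interaction-field} gives \(|\mathbf F_{\mu_s}(x)-\mathbf F_{\mu_{s_0}}(x)|\le\eta\mathcal W_1(\mu_s,\mu_{s_0})\) uniformly in \(x\), which yields the continuity once \(\mu\in C([0,T];\mathcal P_1(M))\).
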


\subsection*{Reflection coupling and the distance process}

Let \(x\ne y\) with \(y\notin\operatorname{Cut}(x)\), let \(\gamma\)
be the minimizing geodesic from \(x\) to \(y\), and let
\(P_{x,y}:T_xM\to T_yM\) denote parallel transport along \(\gamma\).
Define
\[
R_{x,y}v
:=P_{x,y}v-2\langle v,\gamma'(x)\rangle\gamma'(y),
\qquad v\in T_xM.
\]
This isometry reverses the radial component and parallel transports the
orthogonal component. It is the intrinsic analogue of reflection across
the hyperplane orthogonal to \(y-x\).

\begin{definition}[Reflection coupling;
see \cite{LindvallRogers1986,Kendall1986,Cranston1991}]
	\label{def:reflection-coupling}
	Let \(X_t=\pi(U_t)\) and \(Y_t=\pi(V_t)\). Away from the diagonal and
	cut locus, set
	\[
	A_t:=V_t^{-1}R_{X_t,Y_t}U_t\in O(n),
	\qquad
	d\widetilde W_t=A_t\,dW_t.
	\]
	For drift fields \(b_X,b_Y\), the reflection coupling is defined by
	\[
	\begin{aligned}
	dU_t&=\widetilde b_X(U_t)\,dt
	+\sqrt2\sum_{i=1}^nH_i(U_t)\circ dW_t^i,\\
	dV_t&=\widetilde b_Y(V_t)\,dt
	+\sqrt2\sum_{i=1}^nH_i(V_t)\circ d\widetilde W_t^i.
	\end{aligned}
	\]
	Since \(A_t\) is orthogonal, Lévy's characterization shows that
	\(\widetilde W\) is Brownian motion. Thus the marginal generators are
	preserved while the radial noise is reflected.
\end{definition}

We next record the exact distance identity; the endpoint-index bound will
be inserted later. Let \(r_t=\rho(X_t,Y_t)\), let
\(\gamma_t:[0,r_t]\to M\) be the minimizing geodesic, and choose an
orthonormal basis \(E_1,\ldots,E_{n-1}\perp\gamma_t'(0)\). Let \(J_a\)
be the Jacobi field satisfying
\[
J_a(0)=E_a,\qquad J_a(r_t)=P_tE_a,
\]
where \(P_t\) is parallel transport along \(\gamma_t\). Define
\[
\begin{aligned}
I_{\gamma_t}(J,K)
&:=\int_0^{r_t}\!
\bigl(\langle\nabla_{\gamma_t'}J,\nabla_{\gamma_t'}K\rangle
-\langle R(J,\gamma_t')\gamma_t',K\rangle\bigr)\,ds,\\
\mathcal I_t&:=\sum_{a=1}^{n-1}I_{\gamma_t}(J_a,J_a).
\end{aligned}
\]

\begin{proposition}[Reflection-coupling distance formula]
	\label{prop:distance-reflection}
	See \cite{Kendall1986,Cranston1991} and
	\cite[Section~6.6]{Hsu2002}.
	Let \((X_t,Y_t)\) be a reflection coupling with drift fields \(b_X,b_Y\).
	Away from the diagonal and cut locus,
	\begin{align}
		dr_t
		={}&2\sqrt2\,d\beta_t+\mathcal I_t\,dt
		\nonumber\\
		&+\bigl(
		\langle b_Y(Y_t),\gamma_t'(r_t)\rangle
		-\langle b_X(X_t),\gamma_t'(0)\rangle
		\bigr)\,dt,
		\label{eq:distance-reflection-identity}
	\end{align}
	where \(\beta\) is a one-dimensional Brownian motion.
\end{proposition}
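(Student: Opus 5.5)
Away from the diagonal and the cut locus, \(\rho\) is smooth on \(M\times M\). The plan is therefore to apply It\^o's formula to \(r_t=\rho(X_t,Y_t)\) through the lifted pair \((U_t,V_t)\) on \(O(M)\times O(M)\), and to read off the three terms in \eqref{eq:distance-reflection-identity}:
\begin{itemize}
\item the martingale part;
\item the second-order (Hessian) part;
\item the first-order drift part.
\end{itemize}
The pair \((U_t,V_t)\) solves a Stratonovich SDE driven by \(W\) and \(\widetilde W=\int A\,dW\). I convert it to It\^o form and apply the product-space version of Proposition~\ref{prop:generator-formula} to \(\varphi(x,y)=\rho(x,y)\), localized by smooth cutoffs supported away from \(\{x=y\}\cup\operatorname{Cut}\). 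The localization is legitimate up to the exit time of a compact set in the smooth region.

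\textbf{Martingale and drift terms.} By Proposition~\ref{prop:first-variation}, the martingale term is
\[
\sqrt2\sum_i\Bigl(\langle U_te_i,-\gamma_t'(0)\rangle\,dW^i_t+\langle V_te_i,\gamma_t'(r_t)\rangle\,d\widetilde W^i_t\Bigr).
\]
Substituting \(d\widetilde W=A\,dW\) with \(V_tA_t=R_{X_t,Y_t}U_t\) gives
\[
\langle V_tA_te_i,\gamma'(r)\rangle=\langle U_te_i,R^{*}\gamma'(r)\rangle .
\]
Since \(R^{*}\gamma'(r)=-\gamma'(0)\), the two terms add to
\[
-2\sqrt2\sum_i\langle U_te_i,\gamma'(0)\rangle\,dW^i .
\]
Its quadratic variation is \(8\,dt\), because \(\sum_i\langle U e_i,\gamma'(0)\rangle^2=1\). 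By L\'evy's characterization, this equals \(2\sqrt2\,d\beta_t\) for a Brownian motion \(\beta\); the sign is absorbed into \(\beta\). The drift terms follow directly from Proposition~\ref{prop:first-variation}: they are \(\langle b_Y,\gamma'(r)\rangle-\langle b_X,\gamma'(0)\rangle\).

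\textbf{Second-order term.} The generator of the coupled diffusion acts on \(\rho\) through the quadratic form
\[
\sum_i \nabla^2\rho\bigl((U e_i,\,R U e_i),(U e_i,\,R U e_i)\bigr),
\]
with the factor \(2\) from \(\sqrt2\cdot\sqrt2\). Choose the frame so that \(Ue_n=\gamma'(0)\) and \(Ue_a=E_a\) for \(a<n\). Then:
\begin{itemize}
\item The radial pair is \((\gamma'(0),-\gamma'(r))\). It is the derivative of shrinking the geodesic from both ends at unit speed. The distance varies linearly along it, so the Hessian contribution vanishes.
\item For \(a<n\), the pair \((E_a,P_tE_a)\) gives the second variation of length under a variation with endpoint directions \(E_a\) and \(P_tE_a\).
\end{itemize}
By the second variation formula, that second variation equals \(I_{\gamma_t}(J,J)\) for any variation field \(J\) with those endpoint values, plus boundary acceleration terms. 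Choosing the variation through geodesics \(s\mapsto\exp(sE_a)\) at both ends kills the boundary terms. Taking the variation field to be the Jacobi field \(J_a\) with \(J_a(0)=E_a\) and \(J_a(r_t)=P_tE_a\) shows that \(\nabla^2\rho\) evaluated on \((E_a,P_tE_a)\) is exactly \(I_{\gamma_t}(J_a,J_a)\). The index form is minimized by Jacobi fields, and the Hessian of \(\rho\) is computed by them; this uses the absence of conjugate points since \(y\notin\operatorname{Cut}(x)\). Summing over \(a\), and checking the normalization of the factor \(\sqrt2\) against the convention in the displayed formula, gives \(\mathcal I_t\,dt\).

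\textbf{Main obstacle.} The main difficulty is the bookkeeping of the mixed Hessian on \(M\times M\) and the correct Stratonovich-to-It\^o correction. I must verify that the \(\widetilde W\)-driven equation, with an adapted \(A_t\) depending on \((U_t,V_t)\), introduces no extra drift. It does not, because \(A_t\) enters only through \(d\widetilde W\), which is an It\^o integral. For this reason I would rewrite the \(V\)-equation in It\^o form before substituting. Finally, stopping at the exit time from the smooth region turns the identity into the stated local one.
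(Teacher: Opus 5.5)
Your proposal is correct and follows essentially the same route as the paper, which defers to Kendall, Cranston and Hsu and only sketches this decomposition: the first variation formula gives the drift terms and the reflected radial noise $-2\sqrt2\sum_i\langle U_te_i,\gamma_t'(0)\rangle\,dW_t^i$, and the transverse noise contributes $\mathcal I_t\,dt$ through the second variation along the Jacobi fields $J_a$. Two points should be tightened: the Hessian of $\rho$ on $(E_a,P_tE_a)$ equals $I_{\gamma_t}(J_a,J_a)$ because the variation through minimizing geodesics has variation field $J_a$ (other fields with the same endpoints give only an upper bound, not equality), and the It\^o coefficient is $\tfrac12\cdot(\sqrt2)^2=1$, so no extra normalization factor appears.
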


The factor \(2\sqrt2\) is the difference of the reflected radial noises
under the normalization with generator \(\Delta_g\). Transverse noise
produces \(\mathcal I_t\,dt\) through the second variation of distance,
while the two drift terms follow from
Proposition~\ref{prop:first-variation}.

\begin{proposition}[Ricci control of the endpoint-index term;
see \cite{Petersen2016}]
	\label{prop:index-ricci-control}
	If \(\operatorname{Ric}_g\ge -(n-1)k^2g\), then, away from the cut locus,
	\[
	\mathcal I_t
	=\sum_{a=1}^{n-1}I_{\gamma_t}(J_a,J_a)
	\le (n-1)k^2r_t.
	\]
\end{proposition}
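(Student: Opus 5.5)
The plan is to compare each summand $I_{\gamma_t}(J_a,J_a)$ with the index form of a simpler test field having the same endpoints, using the index lemma. Since $y\notin\operatorname{Cut}(x)$, the geodesic $\gamma_t$ has no conjugate points on $[0,r_t]$. Consequently, among all piecewise smooth vector fields $Z$ along $\gamma_t$ orthogonal to $\gamma_t'$ with the same endpoint values, the Jacobi field $J_a$ minimizes $I_{\gamma_t}(Z,Z)$. We take as competitor the parallel field $Z_a(s):=E_a(s)$, where $E_a(s)$ is the parallel transport of $E_a$ along $\gamma_t$. It satisfies $Z_a(0)=E_a$ and $Z_a(r_t)=P_tE_a$, so it has the correct boundary values, and it is orthogonal to $\gamma_t'$ because parallel transport preserves inner products and $\gamma_t'$ is itself parallel. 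The index lemma then gives
\[
I_{\gamma_t}(J_a,J_a)\le I_{\gamma_t}(Z_a,Z_a)=-\int_0^{r_t}\langle R(E_a,\gamma_t')\gamma_t',E_a\rangle\,ds,
\]
because $\nabla_{\gamma_t'}Z_a=0$.

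Next, we sum over $a=1,\ldots,n-1$. The vectors $E_1(s),\ldots,E_{n-1}(s),\gamma_t'(s)$ form an orthonormal basis of $T_{\gamma_t(s)}M$ for every $s$. With the paper's sign convention, in which $\langle R(J,\gamma')\gamma',J\rangle$ is the sectional-curvature term, the sum of these terms over the $E_a(s)$ equals $\operatorname{Ric}(\gamma_t',\gamma_t')$. Therefore
\[
\mathcal I_t\le-\int_0^{r_t}\operatorname{Ric}(\gamma_t'(s),\gamma_t'(s))\,ds\le\int_0^{r_t}(n-1)k^2\,ds=(n-1)k^2r_t,
\]
where the second inequality uses the hypothesis $\operatorname{Ric}_g\ge-(n-1)k^2g$ together with $|\gamma_t'|=1$.

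The main point to check is the applicability of the index lemma, which is the only nontrivial ingredient. Its hypothesis is the absence of points conjugate to $\gamma_t(0)$ on $(0,r_t]$. This holds because $\gamma_t$ is minimizing and its endpoint is not a cut point: a conjugate point at or before $r_t$ would either make the endpoint a cut point or destroy minimality beyond it. The same absence of conjugate points guarantees that the Jacobi fields $J_a$ with the prescribed endpoint values exist and are unique, so $\mathcal I_t$ is well defined. We also need to confirm the sign convention of $R$ so that the curvature term is indeed $+\sec$, consistent with the index form as written. Both are standard (see \cite{Petersen2016}), and no further estimate is required.
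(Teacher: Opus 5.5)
Your proof is correct and is the standard argument the paper relies on. You compare each \(J_a\) with the parallel field \(E_a(s)\) via the index lemma, which applies because a minimizing geodesic ending at a non-cut point has no conjugate points in \((0,r_t]\), and then sum the sectional curvatures to obtain \(\mathcal I_t\le-\int_0^{r_t}\operatorname{Ric}(\gamma_t',\gamma_t')\,ds\le(n-1)k^2r_t\); the paper uses exactly this parallel-comparison bound in the proof of Corollary~\ref{cor:spike-index}.
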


For \(o\in M\), set \(\rho_o(x):=\rho(x,o)\). If
\(\operatorname{Ric}_g\ge -(n-1)k^2g\), then, away from
\(\{o\}\cup\operatorname{Cut}(o)\),
\begin{equation}\label{eq:laplacian-comparison}
\rho_o\,\Delta_g\rho_o\le (n-1)(1+k\rho_o);
\end{equation}
see \cite[Section~7.1.4]{Petersen2016}.

\begin{definition}[Synchronous coupling]\label{def:sync}
Let \(X_t=\pi(U_t)\) and \(Y_t=\pi(V_t)\). Away from the diagonal and
cut locus, synchronous coupling replaces reflection by parallel transport:
\[
A_t^{\mathrm{syn}}
:=V_t^{-1}P_{X_t,Y_t}U_t\in O(n),
\qquad
d\widetilde W_t=A_t^{\mathrm{syn}}\,dW_t.
\]
By Lévy's characterization, \(\widetilde W\) is again an
\(n\)-dimensional Brownian motion.
\end{definition}
\begin{proposition}[Synchronous distance formula]
\label{prop:sync-dist}
Let \((X_t,Y_t)\) be a synchronous coupling with drift fields \(b_X,b_Y\).
Away from the diagonal and cut locus,
\[
dr_t
=
\bigl(
\mathcal I_t+\langle b_Y(Y_t),\gamma_t'(r_t)\rangle
-\langle b_X(X_t),\gamma_t'(0)\rangle
\bigr)\,dt.
\]
If, in addition, \(\operatorname{Ric}_g\ge -(n-1)k^2g\), then
\[
dr_t
\le
\bigl(
(n-1)k^2r_t+\langle b_Y(Y_t),\gamma_t'(r_t)\rangle
-\langle b_X(X_t),\gamma_t'(0)\rangle
\bigr)\,dt.
\]
\end{proposition}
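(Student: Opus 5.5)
The plan is to reuse the proof of Proposition~\ref{prop:distance-reflection} unchanged except for the noise correlation. Only the martingale part and the second-order correction of \(r_t=\rho(X_t,Y_t)\) depend on how the two Brownian motions are coupled. We therefore work on the product manifold \(M\times M\) near a point \((x,y)\) off the diagonal and the cut locus, where \(\rho\) is smooth. The pair \((X_t,Y_t)\) is the projection of \((U_t,V_t)\), driven by \(W\) and \(\widetilde W=\int A^{\mathrm{syn}}\,dW\).

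\emph{Step 1: generator formula for the pair.} Applying Proposition~\ref{prop:generator-formula} on \(O(M)\times O(M)\) to a smooth localization of \(\rho\) gives
\[
dr_t=\bigl(\langle b_X,\nabla_x\rho\rangle+\langle b_Y,\nabla_y\rho\rangle\bigr)dt
+\sqrt2\sum_i\bigl[(U_te_i)_x\rho+(V_tA^{\mathrm{syn}}_te_i)_y\rho\bigr]dW^i_t
+\sum_i \nabla^2\rho\bigl((U_te_i,P_tU_te_i),(U_te_i,P_tU_te_i)\bigr)dt .
\]
Here we used \(V_tA^{\mathrm{syn}}_t=P_{X_t,Y_t}U_t\), and the factor \(\tfrac12\cdot(\sqrt2)^2=1\) in front of the Hessian term.

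\emph{Step 2: identify each term.} By Proposition~\ref{prop:first-variation}, the drift terms are \(\langle b_Y(Y_t),\gamma_t'(r_t)\rangle-\langle b_X(X_t),\gamma_t'(0)\rangle\). For the martingale term, put \(v=U_te_i\). Its integrand is \(-\langle v,\gamma'(0)\rangle+\langle P_tv,\gamma'(r_t)\rangle\). This vanishes because parallel transport along \(\gamma_t\) maps \(\gamma_t'(0)\) to \(\gamma_t'(r_t)\) isometrically. Hence there is no noise, in contrast with the \(2\sqrt2\,d\beta_t\) of the reflection case.

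For the Hessian term we choose the frame \(U_te_1=\gamma_t'(0)\) and \(U_te_{a+1}=E_a\). This is allowed because the sum is frame-independent.
\begin{itemize}
\item The radial direction \((\gamma'(0),\gamma'(r_t))\) is the velocity of a translation along \(\gamma\) itself. The distance is constant along this variation, so its second variation is zero.
\item For the transverse directions \((E_a,P_tE_a)\), the second variation formula gives \(\nabla^2\rho((E_a,P_tE_a),(E_a,P_tE_a))=I_{\gamma_t}(J_a,J_a)\). This uses the fact that the Jacobi field with these boundary values minimizes the index form, and the boundary terms vanish because the endpoint variations are geodesics.
\end{itemize}
Summing the transverse terms gives \(\mathcal I_t\), which proves the identity. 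The main obstacle is this second-variation identification. The cleanest route is to cite the same computation as in the reflection case (\cite[Section~6.6]{Hsu2002}); reflection and parallel transport differ only in the radial sign, and the radial contribution vanishes in both cases.

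\emph{Step 3: the inequality.} If \(\operatorname{Ric}_g\ge-(n-1)k^2g\), Proposition~\ref{prop:index-ricci-control} gives \(\mathcal I_t\le(n-1)k^2r_t\). Substituting this into the identity yields the stated bound.
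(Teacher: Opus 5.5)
Your proposal is correct and follows the same route as the paper. The first-variation formula together with \(P_{X_t,Y_t}\gamma_t'(0)=\gamma_t'(r_t)\) kills the martingale part, and the second-order term is the same \(\mathcal I_t\) as in the reflection case because parallel transport and reflection agree in the transverse directions. The drift comes from the first-variation formula, and the Ricci case follows from Proposition~\ref{prop:index-ricci-control}.

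Your trace computation of the product Hessian (zero in the radial direction, \(I_{\gamma_t}(J_a,J_a)\) in each transverse direction) spells out what the paper only asserts by comparison with the reflection case. One small correction: the identity \(\nabla^2\rho=I_{\gamma_t}(J_a,J_a)\) comes from varying through minimizing geodesics with geodesic endpoint curves, not from the minimizing property of \(J_a\).
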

\begin{proof}
Set \(u_i:=U_te_i\). By the first-variation formula, the martingale part
of \(dr_t\) is
\[
\sqrt2\sum_{i=1}^n
\bigl(
-\langle\gamma_t'(0),u_i\rangle
+\langle\gamma_t'(r_t),P_{X_t,Y_t}u_i\rangle
\bigr)\,dW_t^i.
\]
Since \(P_{X_t,Y_t}\gamma_t'(0)=\gamma_t'(r_t)\), every summand vanishes.
Synchronous and reflection coupling agree in the transverse directions,
so their second-order contribution is the same endpoint-index term
\(\mathcal I_t\,dt\). Adding the drift contributions from the
first-variation formula proves the identity. The final inequality follows
from Proposition~\ref{prop:index-ricci-control}.
\end{proof}
\begin{definition}[Mixed coupling;
cf. \cite{DurmusEberleGuillinZimmer2020}]
\label{def:mixed}
	For \(\delta>0\), choose Lipschitz functions
	\(\phi_r^\delta,\phi_s^\delta:[0,\infty)\to[0,1]\) such that
	\[
	(\phi_r^\delta)^2+(\phi_s^\delta)^2=1,\qquad
	\phi_r^\delta=0\ \text{on }[0,\delta/2],\qquad
	\phi_r^\delta=1\ \text{on }[\delta,\infty).
	\]
	Let \(W,W'\) be independent \(\mathbb R^n\)-valued Brownian motions.
	Away from the cut locus, set
	\[
	r_t:=\rho(X_t,Y_t),\quad
	R_t:=V_t^{-1}R_{X_t,Y_t}U_t,\quad
	P_t:=V_t^{-1}P_{X_t,Y_t}U_t,
	\]
	and define \(R_t:=P_t\) when \(r_t\le\delta/2\). Drive the lifts by
	\[
	\begin{aligned}
	dW_t^X&=\phi_r^\delta(r_t)\,dW_t+\phi_s^\delta(r_t)\,dW_t',\\
	dW_t^Y&=\phi_r^\delta(r_t)R_t\,dW_t
	+\phi_s^\delta(r_t)P_t\,dW_t'.
	\end{aligned}
	\]
	Thus the first component is reflected and the second is transported
	synchronously.
\end{definition}

Because \(R_t,P_t\in O(n)\), \(W\) and \(W'\) are independent, and
\((\phi_r^\delta)^2+(\phi_s^\delta)^2=1\), the quadratic covariations of
both \(W^X\) and \(W^Y\) equal \(tI_n\). Lévy's characterization therefore
shows that both are \(n\)-dimensional Brownian motions. Hence, whenever
the marginal SDEs are unique in law, their laws are independent of the
choice of cutoff. The choices \(\phi_r^\delta\equiv1\) and
\(\phi_r^\delta\equiv0\) formally recover reflection and synchronous
coupling, respectively.

\begin{proposition}[Mixed-coupling distance formula]
\label{prop:mixed-dist}
	Let \((X_t,Y_t)\) be a mixed coupling with drift fields \(b_X,b_Y\), and
	let \(M^r\) denote the martingale part of \(r_t=\rho(X_t,Y_t)\). Away
	from the diagonal and cut locus,
	\begin{align}
		dr_t
		={}&2\sqrt2\,\phi_r^\delta(r_t)\,d\beta_t+\mathcal I_t\,dt
		\nonumber\\
		&+\bigl(
		\langle b_Y(Y_t),\gamma_t'(r_t)\rangle
		-\langle b_X(X_t),\gamma_t'(0)\rangle
		\bigr)\,dt,
		\label{eq:distance-mixed-identity}
	\end{align}
	and
	\[
	d\langle M^r\rangle_t
	=8\bigl(\phi_r^\delta(r_t)\bigr)^2\,dt,
	\]
	where \(\beta\) is a one-dimensional Brownian motion.
\end{proposition}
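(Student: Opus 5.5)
I will compute \(dr_t\) by applying Itô's formula to the two-point function \(\rho\) on \(M\times M\), using the horizontal lifts \(U_t,V_t\). Away from the diagonal and the cut locus, \(\rho\) is smooth near \((X_t,Y_t)\). The Stratonovich-to-Itô conversion then gives three pieces: a drift part, a martingale part, and a second-order part. The drift part comes from \(\widetilde b_X,\widetilde b_Y\). By Proposition~\ref{prop:first-variation} it equals \(\langle b_Y(Y_t),\gamma_t'(r_t)\rangle-\langle b_X(X_t),\gamma_t'(0)\rangle\), exactly as in Propositions~\ref{prop:distance-reflection} and~\ref{prop:sync-dist}. The remaining pieces are the martingale part and the second-order part produced by the noise.

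For the martingale part, write \(u_i:=U_te_i\). Using \(V_tP_t=P_{X_t,Y_t}U_t\) and \(V_tR_t=R_{X_t,Y_t}U_t\), the first variation formula gives
\[
dM^r_t=\sqrt2\sum_i\Bigl(\phi_r^\delta\bigl(-\langle\gamma_t'(0),u_i\rangle+\langle\gamma_t'(r_t),R_{X_t,Y_t}u_i\rangle\bigr)\,dW^i_t
+\phi_s^\delta\bigl(-\langle\gamma_t'(0),u_i\rangle+\langle\gamma_t'(r_t),P_{X_t,Y_t}u_i\rangle\bigr)\,dW'^i_t\Bigr).
\]
The \(dW'\) coefficients vanish, as in the proof of Proposition~\ref{prop:sync-dist}. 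For the \(dW\) coefficients we have \(R_{X_t,Y_t}^{*}\gamma_t'(r_t)=-\gamma_t'(0)\), so each equals \(-2\langle\gamma_t'(0),u_i\rangle\). Since \(\sum_i\langle\gamma_t'(0),u_i\rangle^2=1\), it follows that
\[
d\langle M^r\rangle_t=8(\phi_r^\delta)^2\,dt .
\]
Setting \(\beta_t:=-\int_0^t\sum_i\langle\gamma_s'(0),u_i\rangle\,dW^i_s\) gives a Brownian motion by Lévy's characterization, and then \(dM^r_t=2\sqrt2\,\phi_r^\delta(r_t)\,d\beta_t\). On \(\{r_t\le\delta/2\}\) we have \(\phi_r^\delta=0\), so the convention \(R_t:=P_t\) there does not matter.

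For the second-order term, I compute the joint covariation of \((dX_t,dY_t)\) in the frame \((u_i,R u_i)\), \((u_i,Pu_i)\). The key identity is
\[
d\langle X,Y\rangle = (\phi_r^\delta)^2\,R+(\phi_s^\delta)^2\,P .
\]
Both \(R\) and \(P\) act as \(P_{X_t,Y_t}\) on the orthogonal complement of \(\gamma_t'(0)\). Since \((\phi_r^\delta)^2+(\phi_s^\delta)^2=1\), the transverse pairing is therefore exactly parallel transport, the same as in the pure reflection and pure synchronous cases. The second-order contribution is then \(\sum_{a}\nabla^2\rho\bigl((E_a,P_tE_a),(E_a,P_tE_a)\bigr)\) plus radial terms. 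The second variation formula identifies the transverse sum with \(\sum_a I_{\gamma_t}(J_a,J_a)=\mathcal I_t\), because Jacobi fields minimize the index form with these fixed endpoint values.

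It remains to check the radial block, and I expect this bookkeeping to be the main obstacle. The Hessian of \(\rho\) in the pair \((\gamma'(0),\gamma'(r))\) vanishes, because \(\rho\) is affine along the geodesic. The radial-transverse cross terms also vanish, and so do the mixed covariations between \(W\) and \(W'\) in the transverse directions, since the two Brownian motions are independent. Summing the three pieces yields \eqref{eq:distance-mixed-identity}. Equivalently, one can reduce the whole second-order computation to the two endpoint cases already recorded in Propositions~\ref{prop:distance-reflection} and~\ref{prop:sync-dist}: the second-order term is linear in the covariation structure, and the weights \((\phi_r^\delta)^2\) and \((\phi_s^\delta)^2\) sum to one. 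The radial part contributes zero in both endpoint cases, and both give \(\mathcal I_t\) transversally, so the weighted combination also gives \(\mathcal I_t\).
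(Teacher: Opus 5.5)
Your proposal is correct and follows essentially the same route as the paper. The first-variation formula gives the drift and the radial martingale part: the \(W'\)-terms cancel and the reflected \(W\)-terms double, giving \(2\sqrt2\,\phi_r^\delta(r_t)\,d\beta_t\) with quadratic variation \(8(\phi_r^\delta)^2\,dt\); the transversal agreement of \(R_{X_t,Y_t}\) and \(P_{X_t,Y_t}\), together with \((\phi_r^\delta)^2+(\phi_s^\delta)^2=1\), yields the second-order term \(\mathcal I_t\,dt\). You simply supply more of the bookkeeping than the paper's brief proof, namely the explicit covariation operator and the vanishing of the radial and radial--transverse Hessian blocks.
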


\begin{proof}
	For \(v\in T_{X_t}M\),
	\[
	R_{X_t,Y_t}v-P_{X_t,Y_t}v
	=
	-2\langle v,\gamma_t'(0)\rangle\gamma_t'(r_t),
	\]
	so reflection and parallel transport agree in every transverse direction.
	Their second-order contribution is therefore the same term
	\(\mathcal I_t\,dt\). In the radial direction, the synchronous
	\(W'\)-terms cancel, whereas the reflected \(W\)-terms have opposite
	signs. Hence the martingale part is
	\(2\sqrt2\,\phi_r^\delta(r_t)\,d\beta_t\), with quadratic variation
	\(8(\phi_r^\delta(r_t))^2dt\). The remaining drift terms follow from the
	first-variation formula.
\end{proof}

\begin{remark}\label{rem:mixed-diagonal}
Because \(\phi_r^\delta=0\) on \([0,\delta/2]\), the coupling is
synchronous near the diagonal and no reflection direction is required
there. At the cut locus, we use the standard localization and smooth
approximation for manifold couplings, apply the preceding formula on the
smooth locus, and pass to the limit; see \cite{Kendall1986,Hsu2002}.
The estimates below are uniform under this approximation.
\end{remark}

\subsection*{Modified transportation costs}

Let \(f:[0,\infty)\to[0,\infty)\) be increasing with \(f(0)=0\), and let
\(\Pi(\mu,\nu)\) denote the couplings of \(\mu,\nu\). Define
\[
W_f(\mu,\nu)
:=
\inf_{\pi\in\Pi(\mu,\nu)}
\int_{M\times M}f(\rho(x,y))\,\pi(dx,dy),
\]
For \(\mathbf x,\mathbf y\in M^N\), define
\[
\begin{aligned}
\ell_f^1(\mathbf x,\mathbf y)
&:=\frac1N\sum_{i=1}^Nf(\rho(x_i,y_i)),\\
W_{\ell^1(f)}(\mu,\nu)
&:=\inf_{\pi\in\Pi(\mu,\nu)}
\int_{M^N\times M^N}\ell_f^1(\mathbf x,\mathbf y)\,\pi(d\mathbf x,d\mathbf y).
\end{aligned}
\]

\begin{proposition}[Coordinate coupling bound]
		\label{prop:coordinate-coupling-bound}
		Let \(\mathbf X\) and \(\mathbf Y\) be \(M^N\)-valued random variables
		on a common probability space. Then
		\[
		W_{\ell^1(f)}(\mathcal L(\mathbf X),\mathcal L(\mathbf Y))
		\leq
		\mathbb E\,\ell_f^1(\mathbf X,\mathbf Y)
		=
		\frac1N\sum_{i=1}^N\mathbb E f\bigl(\rho(X^i,Y^i)\bigr).
		\]
\end{proposition}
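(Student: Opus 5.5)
The bound is a direct consequence of the definition of \(W_{\ell^1(f)}\) as an infimum over couplings, together with linearity of expectation. No geometry or stochastic analysis is needed. The plan is to exhibit one admissible coupling whose cost equals the right-hand side.

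First, set \(\pi:=\mathcal L(\mathbf X,\mathbf Y)\), the joint law of the pair \((\mathbf X,\mathbf Y)\) on \(M^N\times M^N\). This is well defined because both variables live on a common probability space. Its first marginal is \(\mathcal L(\mathbf X)\) and its second is \(\mathcal L(\mathbf Y)\), so \(\pi\in\Pi(\mathcal L(\mathbf X),\mathcal L(\mathbf Y))\).

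Next, bound the infimum by the cost of \(\pi\). By the definition of \(W_{\ell^1(f)}\),
\[
W_{\ell^1(f)}(\mathcal L(\mathbf X),\mathcal L(\mathbf Y))
\le\int_{M^N\times M^N}\ell_f^1(\mathbf x,\mathbf y)\,\pi(d\mathbf x,d\mathbf y).
\]
The function \(\ell_f^1\) is nonnegative and Borel measurable. Indeed, \(\rho\) is continuous and \(f\) is monotone, hence Borel. The integral is therefore well defined in \([0,\infty]\). By the change-of-variables formula for image measures, it equals \(\mathbb E\,\ell_f^1(\mathbf X,\mathbf Y)\).

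Finally, expand \(\ell_f^1\) as the sum \(\frac1N\sum_i f(\rho(X^i,Y^i))\) and apply linearity of expectation. Linearity holds for nonnegative terms even when some expectations are infinite, which gives the stated equality.

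The only point needing any care is measurability and the possibility of infinite values. Both are handled by nonnegativity, so I expect no real obstacle.
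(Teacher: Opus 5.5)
Your proposal is correct and follows the same argument as the paper: take the joint law of \((\mathbf X,\mathbf Y)\) as an admissible coupling, bound the infimum by its cost, and expand \(\ell_f^1\) by linearity. Your extra remarks on measurability and possibly infinite expectations are accurate additions that the paper leaves implicit.
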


\begin{proof}
	The joint law of \((\mathbf X,\mathbf Y)\) couples
	\(\mathcal L(\mathbf X)\) and \(\mathcal L(\mathbf Y)\); evaluating the
	infimum defining \(W_{\ell^1(f)}\) at this coupling gives the result.
\end{proof}

\section{Assumptions and examples}\label{sec:assumptions}

\begin{hypothesis}[Geometric and confinement conditions]
	\label{hyp:geometry-confinement}
	Assume \(V\in C^2(M)\). For distinct \(x,y\) off the cut locus, let
	\(\gamma:[0,r]\to M\) be the unit-speed minimizing geodesic, where
	\(r=\rho(x,y)\). We require:
	\begin{enumerate}
		\item There exists a continuous function
		\(C_V:[0,\infty)\to\mathbb R\) such that
		\[
		\langle\nabla V(y),\gamma'(r)\rangle
		-\langle\nabla V(x),\gamma'(0)\rangle
		\ge C_V(r)r,
		\qquad
		\liminf_{r\to\infty}C_V(r)> 0.
		\]
		\item Let \(E_1,\ldots,E_{n-1}\) be a parallel orthonormal frame normal
		to \(\gamma\), and let \(J_a\) be the Jacobi field satisfying
		\[
		J_a(0)=E_a(0),\qquad J_a(r)=E_a(r).
		\]
		There exists a continuous nondecreasing function
		\(H:[0,\infty)\to[0,\infty)\), with \(H(0)=0\), such that
		\[
		\mathcal I(\gamma)
		:=\sum_{a=1}^{n-1}I_\gamma(J_a,J_a)
		\le H(r).
		\]
		Define
		\begin{equation}
		\label{eq:endpoint-D}
		D(r):=rC_V(r)-H(r),
		\end{equation}
		and assume that \(D\) is continuous, \(D(0)=0\), and
		\begin{equation}
		\label{eq:endpoint-D-tail}
		\liminf_{r\to\infty}\frac{D(r)}r>0.
		\end{equation}

		\item Fix \(o\in M\) and set \(r_o(x):=\rho(o,x)\). Assume that
		\(L_0,L_1\ge0\) exist such that
		\begin{equation}
		\label{eq:radial-bound}
		r_o\Delta_g r_o\le L_0+L_1r_o.
		\end{equation}
		classically on \(M\setminus(\{o\}\cup\operatorname{Cut}(o))\) and in the
		barrier sense on \(\operatorname{Cut}(o)\).
	\end{enumerate}
\end{hypothesis}

\begin{hypothesis}[Interaction conditions]
\label{hyp:interaction}
	Let \(f,c_0,C_f\) be supplied by
	Lemma~\ref{lemma:cost-function}. Assume \(W\in C^2(M\times M)\) and that
	there exists \(\eta>0\) such that:
	\begin{enumerate}
		\item The self-interaction force vanishes:
		\[
		\nabla_g^{(1)}W(x,x)=0,\qquad x\in M.
		\]
		\item For distinct \(x,y\) off the cut locus, let
		\(\gamma:[0,r]\to M\) be the minimizing geodesic, with
		\(r=\rho(x,y)\). Then, for all \(z,w\in M\),
		\[
		\left|
		\langle\nabla_g^{(1)}W(x,z),\gamma'(0)\rangle
		-
		\langle\nabla_g^{(1)}W(y,w),\gamma'(r)\rangle
		\right|
		\le
		\eta\bigl[f(r)+f(\rho(z,w))\bigr].
		\]
		\item For all \(x,z,w\in M\),
		\[
		\left|
		\nabla_g^{(1)}W(x,z)-\nabla_g^{(1)}W(x,w)
		\right|_g
		\le
		\eta f(\rho(z,w)).
		\]	
%
	\end{enumerate}
\end{hypothesis}

\begin{Remark}\label{remC_V}
	\begin{enumerate}
		\item Let \(\ell_V:=\liminf_{r\to\infty}C_V(r)>0\). For every
		\(m_V\in(0,\ell_V)\), there exists \(M_V\ge0\) such that
		\begin{equation}\label{eq:coercivity-mV-MV}
	\langle\nabla V(y),\gamma'(r)\rangle
	-\langle\nabla V(x),\gamma'(0)\rangle
	\ge m_Vr-M_V,
	\qquad r=\rho(x,y).
	\end{equation}
	Indeed, choose \(R\) such that \(C_V(r)\ge m_V\) for \(r\ge R\), and set
	\[
	M_V:=\max_{0\le r\le R}r\bigl(m_V-C_V(r)\bigr)_+.
	\]

		\item The two quantitative restrictions have different roles:
		\(m_V>2\eta\) closes the uniform moment estimate in
		Section~\ref{sec:well-posedness}, whereas \(C_f>2\eta\) yields the strict
		contraction rate in Sections~\ref{sec:chaos}--\ref{sec:equilibrium}.
		Neither inequality is needed for local well-posedness, which follows from
		the local regularity of the coefficients.

		\item If \(\operatorname{Ric}_g\ge -(n-1)k^2g\), then one may take
		\[
		H(r)=(n-1)k^2r,\qquad L_0=n-1,\qquad L_1=(n-1)k.
		\]
			Consequently, \eqref{eq:endpoint-D-tail} becomes
			\(\liminf_{r\to\infty}C_V(r)>(n-1)k^2\). The converse fails: on the
			curvature-spike manifolds constructed in
			Appendix~\ref{app:curvature-spikes},
			Hypothesis~\ref{hyp:geometry-confinement}\textup{(ii)--(iii)} holds while
			\(\inf_M\operatorname{Ric}=-\infty\).
	\end{enumerate}
	\end{Remark}

\subsection{A basic Cartan--Hadamard example}

\begin{example}[Cartan--Hadamard manifolds]\label{ex:cartan-hadamard}
	Assume \(-k^2\le\operatorname{Sec}\le0\), fix \(o\in M\), and set
	\(r(x):=\rho(o,x)\). Let
	\[
	V(x)=r(x)^p-ar(x)^q,\qquad a>0,\quad p>q\ge2,
	\]
	and let
	\[
	W(x,y)=\Phi(\rho(x,y)),
	\]
	where \(\Phi\in C^\infty([0,\infty))\),
	\(\Phi'(0)=0\), and \(\Phi',\Phi''\in L^\infty\).
	Then the geometric and interaction hypotheses hold for the cost \(f\)
	constructed in Lemma~\ref{lemma:cost-function}. After replacing
	\(\Phi\) by \(\varepsilon\Phi\) with \(\varepsilon>0\) sufficiently small,
	one also has \(m_V>2\eta\) and \(C_f>2\eta\).
\end{example}

\begin{proof}[Verification]
	For \(x\ne o\), write \(r_o=\rho(o,x)\). The chain rule gives
	\[
	\begin{aligned}
	\operatorname{Hess}V
	&=\bigl[p(p-1)r_o^{p-2}-aq(q-1)r_o^{q-2}\bigr]\,
	dr_o\otimes dr_o\\
	&\quad+\bigl[pr_o^{p-1}-aqr_o^{q-1}\bigr]\,
	\operatorname{Hess}r_o.
	\end{aligned}
	\]
	Hessian comparison gives
	\[
	\operatorname{Hess}r_o
	\ge \frac1{r_o}\bigl(g-dr_o\otimes dr_o\bigr).
	\]
	For all sufficiently large \(r_o\),
	\(pr_o^{p-1}-aqr_o^{q-1}\ge0\), and hence
	\[
	\begin{aligned}
	\operatorname{Hess}V
	\ge{}&\bigl(pr_o^{p-2}-aqr_o^{q-2}\bigr)g\\
	&+\bigl(p(p-2)r_o^{p-2}-aq(q-2)r_o^{q-2}\bigr)
	dr_o\otimes dr_o.
	\end{aligned}
	\]
	Both coefficients tend to infinity. Choose \(\lambda_V>2(n-1)k^2\).
	Then \(\operatorname{Hess}V\ge\lambda_Vg\) outside some ball \(B(o,R)\).

	Set
	\[
	K_R:=
	\max\left\{0,\,
	\sup_{\substack{z\in\overline{B(o,R)}\\ |v|_z=1}}
	\bigl(-\operatorname{Hess}V_z(v,v)\bigr)\right\}.
	\]
	Since metric balls are geodesically convex, a minimizing geodesic
	\(\gamma:[0,d]\to M\) spends at most length \(2R\) in \(B(o,R)\).
	Therefore
	\[
	\begin{aligned}
	\langle\nabla V(y),\gamma'(d)\rangle
	-\langle\nabla V(x),\gamma'(0)\rangle
	&=\int_0^d\operatorname{Hess}V(\gamma',\gamma')\,ds\\
	&\ge \lambda_Vd-2R(\lambda_V+K_R).
	\end{aligned}
	\]
	The global bound \(\operatorname{Hess}V\ge-K_Rg\) also gives the lower
	bound \(-K_Rd\). Hence Hypothesis~\ref{hyp:geometry-confinement}\textup{(i)}
	holds with \(C_V(0):=-K_R\) and, for \(d>0\),
	\[
	C_V(d):=
	\max\left\{-K_R,\,
	\lambda_V-\frac{2R(\lambda_V+K_R)}d\right\}.
	\]
	In particular,
	\[
	\lim_{d\to\infty}C_V(d)=\lambda_V>(n-1)k^2.
	\]

	Set \(s:=\rho(x,y)\). Since \(\Phi'(0)=0\), the mean value theorem gives
	\begin{equation}
		\label{eq:example-phi-prime-over-r}
		\frac{|\Phi'(s)|}{s}\le\|\Phi''\|_\infty,\qquad s>0.
	\end{equation}
	Thus the apparent singularity at the diagonal is removable,
	\(W\in C^2(M\times M)\), and
	\(\nabla_g^{(1)}W(x,x)=0\).
	Away from the diagonal,
	\begin{align*}
		\operatorname{Hess}_{xx}W
		&=
		\Phi''(s)\,d_xs\otimes d_xs
		+
		\Phi'(s)\operatorname{Hess}_{xx}s,\\
		\operatorname{Hess}_{xy}W
		&=
		\Phi''(s)\,d_xs\otimes d_ys
		+
		\Phi'(s)\operatorname{Hess}_{xy}s.
	\end{align*}
	Hessian comparison and
	Lemma~\ref{lem:mixed-hessian-cartan} give
	\[
		|\operatorname{Hess}_{xx}s|
		\leq k\coth(ks),
		\qquad
		|\operatorname{Hess}_{xy}s|
		\leq s^{-1},
	\]
	with \(k\coth(ks)=s^{-1}\) when \(k=0\).
	Using \(\coth u\leq1+u^{-1}\) and
	\eqref{eq:example-phi-prime-over-r}, we obtain
	\begin{align*}
		|\operatorname{Hess}_{xx}W|
		&\leq
		2\|\Phi''\|_\infty+k\|\Phi'\|_\infty,\\
		|\operatorname{Hess}_{xy}W|
		&\leq
		2\|\Phi''\|_\infty.
	\end{align*}
	The remaining Hessian blocks satisfy the same bounds by symmetry. Let
	\(C_W\) bound all four blocks. Integrating along the minimizing geodesics
	from \(x\) to \(y\) and from \(z\) to \(w\) gives
	\[
	\begin{aligned}
	\bigl|
	\langle\nabla_1W(x,z),\gamma'(0)\rangle
	-\langle\nabla_1W(y,w),\gamma'(r)\rangle
	\bigr|
	&\le C_W\bigl(\rho(x,y)+\rho(z,w)\bigr),\\
	\bigl|\nabla_1W(x,z)-\nabla_1W(x,w)\bigr|_g
	&\le C_W\rho(z,w).
	\end{aligned}
	\]
	Since \(c_0u\le f(u)\),
	Hypothesis~\ref{hyp:interaction}\textup{(ii)--(iii)} holds with
	\(\eta=C_W/c_0\). Replacing \(\Phi\) by \(\varepsilon\Phi\) replaces
	\(\eta\) by \(\varepsilon\eta\).

	Since \(\operatorname{Sec}\ge-k^2\),
	\(\operatorname{Ric}_g\ge-(n-1)k^2g\), so
	\[
	H(r)=(n-1)k^2r,
	\qquad
	r_o\Delta_gr_o\le(n-1)+(n-1)kr_o.
	\]
	Moreover,
	\[
	\lim_{r\to\infty}\frac{D(r)}r
	=\lambda_V-(n-1)k^2>0.
	\]
	Thus Hypothesis~\ref{hyp:geometry-confinement} holds, and
	Lemma~\ref{lemma:cost-function} provides \(f,c_0,C_f\). Choose any
	\(m_V\in(0,\lambda_V)\). Since scaling \(\Phi\) scales \(\eta\) but does
	not change \(m_V\) or \(C_f\), taking \(\varepsilon>0\) sufficiently small
	ensures
	\[
	2\eta<\min\{m_V,C_f\}.
	\]
\end{proof}
\begin{lemma}[Construction of the cost function]
\label{lemma:cost-function}
	Assume Hypothesis~\ref{hyp:geometry-confinement}\textup{(i)--(ii)}.
	Then there exist \(C_f,c_0>0\) and an increasing concave function
	\(f\in C^2([0,\infty))\) such that, for \(r\ge0\),
	\begin{equation}\label{eq:endpoint-cost-ODE}
	\begin{aligned}
	&f(0)=0,\qquad c_0r\le f(r)\le r,\qquad 0<f'(r)\le1,\\
	&4f''(r)-D(r)f'(r)\le-C_ff(r).
	\end{aligned}
	\end{equation}
\end{lemma}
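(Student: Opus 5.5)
We adapt Eberle's construction \cite{Eberle2016} with the generator \(4\,d^2/dr^2-D(r)\,d/dr\). Since \(D\) is continuous with \(\liminf_{r\to\infty}D(r)/r>0\), fix \(\kappa>0\) and \(R_0>0\) such that \(D(r)\ge\kappa r\) for \(r\ge R_0\). Set \(D^-(r):=\max\{-D(r),0\}\), which is continuous and bounded on \([0,R_0]\). Let \(R_1\ge R_0\) be chosen later and define
\[
\varphi(r):=\exp\Bigl(-\tfrac14\int_0^r D^-(s)\,ds\Bigr),\qquad
\Phi(r):=\int_0^r\varphi(s)\,ds,
\]
together with
\[
g(r):=1-\frac{c}{2}\int_0^{r\wedge R_1}\frac{\Phi(s)}{\varphi(s)}\,ds,
\qquad
c:=\Bigl(\int_0^{R_1}\frac{\Phi(s)}{\varphi(s)}\,ds\Bigr)^{-1}.
\]
Then \(g\in[1/2,1]\). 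The first candidate is \(f_0(r):=\int_0^r\varphi(s)g(s)\,ds\) for \(r\le R_1\). The point of the definition is that \(\varphi'=-\tfrac14D^-\varphi\le\tfrac14 D\varphi\), since \(-D^-\le D\). Hence on \([0,R_1]\),
\[
4f_0''-Df_0'=4\varphi'g+4\varphi g'-D\varphi g\le 4\varphi g'=-2c\,\Phi\le -2c f_0,
\]
because \(f_0\le\Phi\). This already gives the differential inequality with \(C_f=2c\) on \([0,R_1]\). Concavity holds because \(\varphi'\le0\) and \(g'\le0\), and \(f_0'>0\). Since \(\varphi\le1\), we have \(f_0\le r\). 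Normalization to \(f'\le1\) is automatic because \(\varphi g\le1\).

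For \(r\ge R_1\) the function \(g\) is constant, equal to \(1/2\), and \(\varphi\) is constant, equal to \(\varphi(R_0)=:\varphi_\infty>0\), because \(D^-=0\) beyond \(R_0\). So \(f_0\) extends linearly with slope \(\varphi_\infty/2\), giving \(f_0''=0\). There we need \(D(r)\,\varphi_\infty/2\ge C_f f_0(r)\). Since \(D(r)\ge\kappa r\) and \(f_0(r)\le r\), it suffices that \(\kappa\varphi_\infty/2\ge C_f\). So we take
\[
C_f:=\min\Bigl\{2c,\ \kappa\varphi_\infty/2\Bigr\}>0.
\]
The inequality on \([0,R_1]\) with \(2c\) replaced by the smaller \(C_f\) still holds, because \(f_0\ge0\). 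With this choice, \(R_1\) can simply be taken equal to \(R_0\).

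The lower bound \(c_0r\le f(r)\) follows from \(f_0'\ge\varphi_\infty/2\) everywhere, since \(\varphi\) is decreasing and \(g\ge1/2\). So \(c_0:=\varphi_\infty/2\) works.

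The main obstacle is regularity. The function \(g\) has a kink at \(R_1\): its left derivative is \(-\tfrac c2\Phi(R_1)/\varphi(R_1)\ne0\), while its right derivative is \(0\). The function \(\varphi\) is only \(C^1\) in general, since \(D^-\) is merely continuous. Consequently \(f_0\) is \(C^1\) with a jump in \(f_0''\) at \(R_1\). The jump goes from negative to zero, so \(f_0\) remains concave.

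To obtain \(f\in C^2\), I would replace the cutoff \(r\wedge R_1\) in \(g\) by a smooth version. Take \(g(r)=1-\tfrac c2\int_0^r\chi(s)\Phi(s)/\varphi(s)\,ds\), where \(\chi\) is continuous, equals \(1\) on \([0,R_1]\), vanishes on \([R_1+1,\infty)\), and satisfies \(0\le\chi\le1\). Normalize \(c\) so that \(g\ge1/2\). On \([0,R_1]\) the argument above is unchanged. On the transition layer \([R_1,R_1+1]\), \(g'\le0\) only helps, and the drift term \(D\ge\kappa r\) handles the remainder as in the linear region, because \(4\varphi g'\le0\). Since \(\varphi\in C^1\) and \(g\in C^1\), we get \(f=\int_0^r\varphi g\in C^2\). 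The inequality \(4f''-Df'\le -C_f f\) then holds pointwise on all of \([0,\infty)\), with the same \(C_f\) and \(c_0\) up to the factor from the normalization of \(g\).
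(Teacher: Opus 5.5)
Your proposal is correct and is essentially the paper's construction: the same $\varphi$ and $\Phi$, the cutoff-weighted $g$ normalized so that $g\ge 1/2$, the bound $4f''-Df'=-D^+\varphi g+4\varphi g'$, and the constants $C_f=\min\{2c,\kappa\varphi_\infty/2\}$ and $c_0=\varphi_\infty/2$ (your $c$ is the paper's $1/A$). The only difference is presentational: the paper puts the smooth cutoff $\chi$ in from the start, while you pass through the hard cutoff $r\wedge R_1$ first and then smooth it.
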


\begin{proof}
	By \eqref{eq:endpoint-D-tail}, choose \(d>0\) and \(R_0>0\) such that
	\(D(r)\ge dr\) for \(r\ge R_0\). Set \(D^-(r):=(-D(r))_+\) and define
	\[
	\varphi(r):=\exp\!\left(-\frac14\int_0^rD^-(s)\,ds\right),
	\qquad
	\Phi(r):=\int_0^r\varphi(s)\,ds.
	\]
	Then \(\varphi\) is constant on \([R_0,\infty)\); denote this value by
	\(\varphi_\infty>0\).

	Choose \(R_1>R_0\) and
	\(\chi\in C^\infty([0,\infty);[0,1])\) with \(\chi=1\) on
	\([0,R_0]\) and \(\chi=0\) on \([R_1,\infty)\). Define
	\[
	\begin{aligned}
	A&:=\int_0^{R_1}\chi(s)\frac{\Phi(s)}{\varphi(s)}\,ds>0,\\
	g(r)&:=1-\frac1{2A}\int_0^r\chi(s)\frac{\Phi(s)}{\varphi(s)}\,ds,
	\qquad
	f(r):=\int_0^r\varphi(s)g(s)\,ds.
	\end{aligned}
	\]
	By construction, \(\frac12\le g\le1\) and
	\(\varphi_\infty\le\varphi\le1\). Hence
	\[
	0<f'\le1,
	\qquad
	\frac{\varphi_\infty}{2}r\le f(r)\le r.
	\]
	Thus one may take \(c_0=\varphi_\infty/2\). Moreover,
	\[
	\varphi'=-\frac14D^-\varphi,
	\qquad
	g'=-\frac1{2A}\chi\frac{\Phi}{\varphi},
	\]
	and therefore \(f''=\varphi'g+\varphi g'\le0\). Writing
	\(D^+=\max\{D,0\}\), we have
	\[
	4f''-Df'=-D^+\varphi g+4\varphi g'.
	\]
	Since \(\chi=1\) and \(f\leq\Phi\) on \([0,R_0]\), while
	\(D(r)\geq dr\), \(\varphi\geq\varphi_\infty\),
	\(g\geq\tfrac12\), and \(f(r)\leq r\) on \([R_0,\infty)\),
	\[
	\begin{aligned}
	4f''(r)-D(r)f'(r)
	&\leq-\frac{2\Phi(r)}{A}
	\leq-\frac{2}{A}f(r),
	&&0\leq r\leq R_0,\\
	4f''(r)-D(r)f'(r)
	&\leq-\frac{d\varphi_\infty}{2}r
	\leq-\frac{d\varphi_\infty}{2}f(r),
	&&r\geq R_0.
	\end{aligned}
	\]
	Consequently, with
	\[
	C_f:=\min\left\{\frac{2}{A},\frac{d\varphi_\infty}{2}\right\}>0,
	\]
	we obtain
	\[
	4f''(r)-D(r)f'(r)\leq-C_f f(r),
	\qquad r\geq0.
	\]
	Finally, \(D(0)=\Phi(0)=0\) implies
	\(\varphi'(0)=g'(0)=0\), and hence \(f''(0)=0\).
\end{proof}

\begin{remark}[Size of the contraction rate]\label{rem:cf-size}
	The rates in Theorems~\ref{thm:propagation-chaos} and
	\ref{thm:convergence-equilibrium} depend on \(C_f\).
	With the notation of Lemma~\ref{lemma:cost-function}, set
	\begin{equation}
	\Lambda:=\int_0^{R_0}D^-(s)\,ds .
	\label{eq:cf-barrier}
	\end{equation}
	Since \(D^-=0\) on \([R_0,\infty)\),
	\(\varphi_\infty=e^{-\Lambda/4}\). Moreover,
	\[
	A\le\int_0^{R_1}\frac{\Phi(s)}{\varphi(s)}\,ds
	\le e^{\Lambda/4}\,\frac{R_1^2}{2},
	\]
	and therefore
	\begin{equation}
	C_f\ \ge\ e^{-\Lambda/4}
	\min\left\{\frac{4}{R_1^2},\ \frac d2\right\}.
	\label{eq:cf-lower}
	\end{equation}

	Since \(D^-(s)=\bigl(H(s)-sC_V(s)\bigr)^+\), if
	\(C_V\ge-\kappa_V\) on \([0,R_0]\), then
	\begin{equation}
	\Lambda\le\int_0^{R_0}H(s)\,ds+\frac{\kappa_VR_0^2}{2}.
	\label{eq:cf-barrier-bound}
	\end{equation}
	Consequently,
	\begin{equation}
	C_f\ \ge\
	\exp\left\{-\frac14\int_0^{R_0}H(s)\,ds-\frac{\kappa_VR_0^2}{8}\right\}
	\min\left\{\frac4{R_1^2},\ \frac d2\right\}.
	\label{eq:cf-lower-hyp}
	\end{equation}
	Thus curvature affects the contraction rate through the endpoint-index
	bound \(H\) on the region where the net dissipation \(D\) is not yet
	positive. Under the Ricci lower bound of Remark~\ref{remC_V}, this becomes
	\[
	C_f\ \ge\
	\exp\left\{-\frac{\bigl(\kappa_V+(n-1)k^2\bigr)R_0^2}{8}\right\}
	\min\left\{\frac4{R_1^2},\ \frac d2\right\}.
	\]
\end{remark}

The coupling used below is synchronous near the diagonal, so the
second-order term in \eqref{eq:endpoint-cost-ODE} is switched off there.
The following lemma quantifies the resulting defect.

\begin{lemma}[Near-diagonal defect]\label{lem:cost-defect}
	Let \(f\) and \(C_f\) be as in Lemma~\ref{lemma:cost-function}, let
	\(\delta>0\), and let \(\phi^\delta_r\) be as in
	Definition~\ref{def:mixed}.  Set
	\[
	\omega(\delta):=\sup_{0\le s\le\delta}D(s)^-,
	\qquad
	e(\delta):=\omega(\delta)+C_ff(\delta),
	\]
	where \(D^-:=\max(-D,0)\).  Then
	\begin{equation}
		-D(r)f'(r)+4f''(r)\phi^\delta_r(r)^2
		\le
		-C_ff(r)+e(\delta),
		\qquad r\ge0,
		\label{eq:cost-defect}
	\end{equation}
	and \(e(\delta)\to0\) as \(\delta\to0^+\).
\end{lemma}

\begin{proof}
	For \(r\geq\delta\), \(\phi_r^\delta(r)=1\), so the claim follows
	directly from \eqref{eq:endpoint-cost-ODE}. For \(0\leq r<\delta\),
	the concavity of \(f\) and \(0<f'\leq1\) give
	\[
	4f''(r)\phi_r^\delta(r)^2\leq0,
	\qquad
	-D(r)f'(r)\leq D^-(r)\leq\omega(\delta).
	\]
	Since \(f\) is increasing,
	\[
	\omega(\delta)
	\leq-C_ff(r)+C_ff(\delta)+\omega(\delta)
	=-C_ff(r)+e(\delta),
	\]
	which proves \eqref{eq:cost-defect}. Finally, continuity of \(D\) and
	\(f\), together with \(D(0)=f(0)=0\), yields
	\(e(\delta)\to0\) as \(\delta\downarrow0\).
\end{proof}

\section{Well-posedness}\label{sec:well-posedness}
We establish global well-posedness of
\eqref{eq:McKean-Vlasov-SDE}. Smoothness of \(V\) and \(W\) provides
local well-posedness for each frozen measure flow, while
Lemma~\ref{lem:interaction-field} controls the dependence of the drift
on that flow. Since \(M\) is noncompact, local existence alone does not
exclude explosion or provide the moments required in the empirical
interaction estimate \eqref{2nd}. We therefore combine a law-flow fixed
point argument with stopped distance-moment estimates that are uniform
in the stopping radius. These estimates yield non-explosion, uniform
moments, and continuation of the solution for all times.

\begin{lemma}\label{lem:interaction-field}
	Assume Hypothesis~\ref{hyp:interaction}. For
	\(\nu,\widetilde\nu\in\mathcal P_1(M)\) and \(x\in M\),
	\[
	\left|\mathbf F_\nu(x)-\mathbf F_{\widetilde\nu}(x)\right|
	\leq\eta\,\mathcal W_1(\nu,\widetilde\nu).
	\]
	Consequently, if
	\(\nu\in C([0,T];\mathcal P_1(M))\), then
	\(t\mapsto\mathbf F_{\nu_t}(x)\) is continuous for every \(x\in M\).
\end{lemma}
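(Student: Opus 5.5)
The plan is to control the difference of forces pointwise through a coupling of the two measures, using Hypothesis~\ref{hyp:interaction}(iii) together with the upper bound \(f(r)\le r\) from Lemma~\ref{lemma:cost-function}.

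Fix \(x\in M\) and any coupling \(\pi\in\Pi(\nu,\widetilde\nu)\). Since both marginals are \(\nu\) and \(\widetilde\nu\), we can write
\[
\mathbf F_\nu(x)-\mathbf F_{\widetilde\nu}(x)
=\int_{M\times M}\bigl(\nabla_g^{(1)}W(x,z)-\nabla_g^{(1)}W(x,w)\bigr)\,\pi(dz,dw).
\]
This is a difference of vectors in the single tangent space \(T_xM\), so no parallel transport is involved.

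\emph{Integrability.} The rewriting above needs each integral to converge. Setting \(w=x\) in Hypothesis~\ref{hyp:interaction}(iii) and using (i) gives
\[
|\nabla_g^{(1)}W(x,z)|_g\le\eta f(\rho(z,x))\le\eta\rho(z,x).
\]
This is integrable against any measure in \(\mathcal P_1(M)\). Hence both \(\mathbf F_\nu(x)\) and \(\mathbf F_{\widetilde\nu}(x)\) are well defined, and the splitting is legitimate.

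\emph{The estimate.} Taking norms inside the integral and applying (iii) again, then \(f(r)\le r\), yields
\[
|\mathbf F_\nu(x)-\mathbf F_{\widetilde\nu}(x)|\le\eta\int f(\rho(z,w))\,d\pi\le\eta\int\rho(z,w)\,d\pi.
\]
Taking the infimum over \(\pi\) gives \(\eta\,\mathcal W_1(\nu,\widetilde\nu)\). In fact this argument gives the slightly stronger bound \(\eta W_f(\nu,\widetilde\nu)\).

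\emph{Continuity in time.} If \(\nu\in C([0,T];\mathcal P_1(M))\), then \(\mathcal W_1(\nu_t,\nu_s)\to0\) as \(s\to t\). The estimate just proved then gives \(|\mathbf F_{\nu_t}(x)-\mathbf F_{\nu_s}(x)|\le\eta\mathcal W_1(\nu_t,\nu_s)\to0\), which is the continuity of \(t\mapsto\mathbf F_{\nu_t}(x)\).

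No step is a real obstacle. The only point needing care is the integrability check, which is what justifies splitting the integral; it follows from conditions (i) and (iii) together with first moments, as shown above.
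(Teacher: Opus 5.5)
Your proof is correct and follows the paper's argument: integrate Hypothesis~\ref{hyp:interaction}\textup{(iii)} against an arbitrary coupling \(\pi\in\Pi(\nu,\widetilde\nu)\), use \(f(r)\le r\), take the infimum over couplings, and read off continuity in time from the resulting Lipschitz bound. Your extra integrability check, which combines \textup{(i)} with \textup{(iii)} at \(w=x\) to show that \(\mathbf F_\nu(x)\) is well defined for \(\nu\in\mathcal P_1(M)\), is a point the paper leaves implicit.
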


\begin{proof}
	For any \(\pi\in\Pi(\nu,\widetilde\nu)\),
	Hypothesis~\ref{hyp:interaction}\textup{(iii)} and \(f(r)\le r\) give
	\[
	\left|\mathbf F_\nu(x)-\mathbf F_{\widetilde\nu}(x)\right|
	\leq
	\eta\int f(\rho(z,w))\,\pi(dz,dw)
	\leq\eta\int\rho(z,w)\,\pi(dz,dw).
	\]
	Taking the infimum over \(\pi\) proves the estimate, and its final
	assertion follows immediately.
\end{proof}
\begin{prop}[Global well-posedness and uniform moment bound]
	\label{prop:MV-moment-control}
	Let \(o\in M\), \(p\geq2\), and suppose that
	Hypotheses~\ref{hyp:geometry-confinement} and
	\ref{hyp:interaction} hold, with \(m_V>2\eta\), and that
	\[
	\mathbb E\,\rho(o,\bar X_0)^p<\infty.
	\]
	For \(\epsilon\in(0,p(m_V-2\eta))\), let \(C_\epsilon\) be the
	constant in Lemma~\ref{lem:frozen-moment} and set
	\begin{equation}
	M:=\max\left\{
	\mathbb E\,\rho(o,\bar X_0)^p,\,
	\frac{C_\epsilon}{p(m_V-2\eta)-\epsilon}
	\right\}.
	\label{eq:moment-ball-radius}
	\end{equation}
	Then \eqref{eq:McKean-Vlasov-SDE} has a unique non-explosive strong
	solution on \([0,\infty)\). Its law flow belongs to
	\(C([0,\infty);\mathcal P_p(M))\), is unique in
	\(C([0,\infty);\mathcal P_1(M))\), and satisfies
	\begin{equation}
	\sup_{t\geq0}\int_{M}\rho(o,z)^p\,\mu_t(dz)\leq M.
	\label{eq:MV-uniform-moment}
	\end{equation}
\end{prop}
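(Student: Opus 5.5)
The proof is a Picard-type fixed point on law flows, combined with a stopped moment estimate that is uniform in the stopping radius. Fix \(T>0\). Let \(\nu\in C([0,T];\mathcal P_p(M))\) be a frozen flow with \(\sup_{t\le T}\int\rho(o,z)^p\nu_t(dz)\le M\). Consider the frozen SDE
\[
dX_t=b_{\nu_t}(X_t)\,dt+\sqrt2\,dB_t^M .
\]
By Lemma~\ref{lem:interaction-field} its drift is continuous in \(t\), and it is \(C^1\) in \(x\) because \(W\in C^2\). It therefore has a unique maximal strong solution through the horizontal lift \eqref{eq:lifted-SDE-drift}.

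I would apply Itô's formula (Proposition~\ref{prop:generator-formula}) to \(r_o^p\), stopped at the exit time \(\tau_R\) of \(B(o,R)\). At \(\operatorname{Cut}(o)\) I would use the barrier-sense bound of Hypothesis~\ref{hyp:geometry-confinement}(iii), via Kendall's local-time argument, which only adds a nonpositive term. The drift of \(r_o^p\) is
\[
p r_o^{p-1}\bigl(\Delta_g r_o+\langle b_\nu,\nabla r_o\rangle\bigr)+p(p-1)r_o^{p-2}.
\]
I would bound each piece as follows.
\begin{itemize}
\item The Laplacian term: \(p r_o^{p-1}\Delta_g r_o\le p r_o^{p-2}(L_0+L_1r_o)\).
\item The confinement term: by \eqref{eq:coercivity-mV-MV} with the pair \((o,x)\), we have \(\langle\nabla V(x),\nabla r_o\rangle\ge m_Vr_o-M_V-|\nabla V(o)|\).
\item The interaction term: Hypothesis~\ref{hyp:interaction}(i),(ii) with \(z=x\), \(w=o\) bound the radial self-force \(\langle\nabla_1W(x,x),\nabla r_o\rangle - \langle \nabla_1 W(o,o),\cdot\rangle\). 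Combined with (iii) and \(f(r)\le r\), this gives \(|\langle\mathbf F_\nu(x),\nabla r_o\rangle|\le \eta r_o+\eta\int\rho(x,z)\,\nu(dz)+C\), which is at most \(2\eta r_o+\eta\int\rho(o,z)\,\nu(dz)+C\).
\end{itemize}
Using Young's inequality and \(\int\rho(o,\cdot)\,d\nu\le M^{1/p}\), all lower-order terms are absorbed into \(\epsilon r_o^p+C_\epsilon\). This is the content I take Lemma~\ref{lem:frozen-moment} to provide:
\[
\frac{d}{dt}\mathbb E\,r_o^p(X_{t\wedge\tau_R})\le -\bigl(p(m_V-2\eta)-\epsilon\bigr)\,\mathbb E\,r_o^p(X_{t\wedge\tau_R})+C_\epsilon .
\]
I expect the main obstacle to be making \(C_\epsilon\) depend on \(\nu\) only through the bound \(M\), without circularity. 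I would first try to let the \(\eta\int\rho\,d\nu\) term act only on \(r_o^{p-1}\) and absorb it via Young's inequality into \(\epsilon r_o^p\) plus \(C\,(\int\rho\,d\nu)^p\le CM\). If that leaves \(C_\epsilon\) depending linearly on \(M\), I would instead couple the self-consistent estimate at the fixed point with \(\mu=\nu\), so that the \(\eta\,\mathbb E\rho(o,X)\) term merges into the \(2\eta\) coefficient.

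By Gronwall's inequality, \(\mathbb E\,r_o^p(X_{t\wedge\tau_R})\le\max\{\mathbb E r_o^p(X_0),\,C_\epsilon/(p(m_V-2\eta)-\epsilon)\}\le M\), uniformly in \(R\). Markov's inequality then gives \(\mathbb P(\tau_R\le T)\le M/R^p\to0\), so the solution does not explode. Fatou's lemma yields the moment bound \(M\) for the law \(\Gamma(\nu)_t:=\mathcal L(X_t)\) for all \(t\). Continuity of \(t\mapsto\Gamma(\nu)_t\) in \(\mathcal P_p\) follows from continuity of paths together with uniform integrability, using a \(p'>p\) moment or a localization argument. Hence \(\Gamma\) maps the closed set \(\mathcal C_M:=\{\nu:\sup_t\int\rho(o,\cdot)^pd\nu_t\le M\}\) into itself.

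For the contraction step, take two frozen flows \(\nu,\tilde\nu\in\mathcal C_M\) and drive the two solutions with the same initial point and a synchronous coupling (Definition~\ref{def:sync}). By Proposition~\ref{prop:sync-dist}, the drift of \(r_t=\rho(X_t,\tilde X_t)\) is \(\mathcal I_t-(\text{confinement term})+(\text{force difference})\). This is at most
\[
H(r_t)-r_tC_V(r_t)+\eta\bigl(2f(r_t)\bigr)+\eta\,\mathcal W_1(\nu_t,\tilde\nu_t)\le K r_t+\eta\,\mathcal W_1(\nu_t,\tilde\nu_t),
\]
where I used Lemma~\ref{lem:interaction-field} and the fact that \(-D(r)\le Kr\) (continuity plus \eqref{eq:endpoint-D-tail}). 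Gronwall's inequality gives
\[
\sup_{s\le t}\mathcal W_1(\Gamma\nu_s,\Gamma\tilde\nu_s)\le \eta e^{Kt}\int_0^t\mathcal W_1(\nu_s,\tilde\nu_s)\,ds .
\]
Iterating this bound (or using a weighted sup norm \(e^{-\lambda t}\)) makes \(\Gamma\) a contraction on \(C([0,T];\mathcal P_1)\). Hence there is a unique fixed point, unique in \(C([0,T];\mathcal P_1(M))\), and \(\mathcal C_M\) is complete under this metric by lower semicontinuity of moments. Strong existence and pathwise uniqueness of the McKean–Vlasov SDE follow from those of the frozen SDE at the fixed point. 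Since \(T\) is arbitrary and \(M\) does not depend on \(T\), consistency of solutions across horizons gives the global solution and \eqref{eq:MV-uniform-moment}.
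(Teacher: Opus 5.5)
Two steps of your plan fail under the stated hypotheses.

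\textbf{Contraction.} Your synchronous-coupling Gronwall argument rests on $-D(r)\le Kr$. This does not follow from continuity of $D$ and \eqref{eq:endpoint-D-tail}. Hypothesis~\ref{hyp:geometry-confinement}(ii) only requires $H$ to be continuous with $H(0)=0$, so $-D(r)/r=H(r)/r-C_V(r)$ may blow up as $r\downarrow0$. This happens in the paper's own curvature-spike example, where $H(r)=C\min\{r^{1/9},1\}$. There $\inf\operatorname{Ric}=-\infty$, so even the true index term admits no global bound $\mathcal I_t\le Kr_t$. A differential inequality of the form $\frac{d}{dt}\mathbb E r_t\le\mathbb E H(r_t)+\dots$ then yields nothing proportional to $\sup_s\mathcal W_1(\nu_s,\widetilde\nu_s)$. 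The paper instead uses the mixed coupling with the concave cost $f$. Away from the diagonal, the reflected noise makes $4f''-Df'$ absorb $D^-$ completely, whatever its behaviour near $0$. Near the diagonal, Lemma~\ref{lem:cost-defect} costs only $e(\delta)=\sup_{s\le\delta}D^-(s)+C_ff(\delta)\to0$. This gives $\mathbb Ef(r_t)\le\eta\int_0^te^{a(t-s)}\mathcal W_1(\nu_s,\widetilde\nu_s)\,ds+e(\delta)te^{at}$ with $a=(\eta-C_f)_+$. Letting $\delta\downarrow0$ is legitimate because the marginal laws do not depend on $\delta$. Combined with $c_0r\le f(r)$, this gives a contraction on short time intervals.

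\textbf{Invariance of $\mathcal C_M$.} You never prove $\Gamma(\mathcal C_M)\subset\mathcal C_M$. The inequality you attribute to Lemma~\ref{lem:frozen-moment}, with rate $p(m_V-2\eta)-\epsilon$ and a $\nu$-independent $C_\epsilon$, is not what that lemma states, and neither of your fixes closes the gap. Absorbing $p\eta\bar m\,r_o^{p-1}$ into $\epsilon r_o^p$, where $\bar m\ge\int\rho(o,\cdot)\,d\nu_t$, produces a constant of order $\epsilon^{-(p-1)}\eta^p\bar m^p$. The stationary level then stays below $M$ only under a smallness condition on $\eta$ far stronger than $m_V>2\eta$. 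The self-consistent estimate holds only at a fixed point, and you seek that fixed point inside $\mathcal C_M$, so this fix is circular. Your interaction bound also carries a spurious extra $\eta r_o$. Since $\nabla_g^{(1)}W(x,x)=0$, Hypothesis~\ref{hyp:interaction}(iii) gives $|\nabla_g^{(1)}W(x,z)|\le\eta\rho(x,z)$. Hence $|\langle\mathbf F_\nu(x),\nabla r_o\rangle|\le\eta\bigl(r_o+\int\rho(o,z)\,\nu(dz)\bigr)$. With your coefficient $2\eta$, both the optimal Young split and the merging at the fixed point require $m_V>3\eta$. The missing idea is to keep the sharp coefficient $\eta$ and absorb into a fixed portion of $r_o^p$ rather than into $\epsilon r_o^p$:
\[
p\eta\bar m\,r_o^{p-1}\le\eta(p-1)r_o^p+\eta\bar m^{\,p}.
\]
This gives the rate $c_\epsilon=p(m_V-2\eta)+\eta-\epsilon$ and the constant $\eta\bar m^p+C_\epsilon$, with $C_\epsilon$ independent of $\nu$. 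By the choice of $M$, $C_\epsilon\le(c_\epsilon-\eta)M$, hence $(\eta M+C_\epsilon)/c_\epsilon\le M$. The surplus $\eta$ in the rate pays exactly for the $\eta\bar m^p$ term.
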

To prove the proposition, fix
\(\nu\in C([0,\infty);\mathcal P_1(M))\) and consider the frozen SDE
\begin{equation}\label{frozen}
dX_t^\nu
=b_{\nu_t}(X_t^\nu)\,dt+\sqrt2\,dB_t^M,
\qquad
X_0^\nu=\bar X_0.
\end{equation}

\begin{lemma}[Frozen moment estimate]\label{lem:frozen-moment}
	Let \(p\geq2\) and let
	\(\nu\in C([0,\infty);\mathcal P_1(M))\) satisfy
	\[
	\bar m:=\sup_{t\geq0}
	\left(\int_{M}\rho(o,z)^p\,\nu_t(dz)\right)^{1/p}<\infty .
	\]
	Assume Hypotheses~\ref{hyp:geometry-confinement} and
	\ref{hyp:interaction}, \(m_V>2\eta\), and
	\(\mathbb E\rho(o,X_0^\nu)^p<\infty\). For
	\(\epsilon\in(0,p(m_V-2\eta))\), set
	\[
	c_\epsilon:=p(m_V-2\eta)+\eta-\epsilon>0.
	\]
	Then the maximal local strong solution of \eqref{frozen} is
	non-explosive, and there exists a constant \(C_\epsilon\), independent
	of \(\nu\), such that
	\begin{equation}
	\mathbb E\rho(o,X_t^\nu)^p
	\leq e^{-c_\epsilon t}\mathbb E\rho(o,X_0^\nu)^p
	+\frac{\eta\bar m^p+C_\epsilon}{c_\epsilon}
	\bigl(1-e^{-c_\epsilon t}\bigr),
	\qquad t\geq0.
	\label{eq:frozen-moment-bound}
	\end{equation}
\end{lemma}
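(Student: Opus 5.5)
We will apply Itô's formula to \(\rho(o,X_t^\nu)^p\), stopped at the exit time \(\tau_R\) of the ball \(B(o,R)\), and then let \(R\to\infty\). Write \(r_t:=r_o(X_t^\nu)\). On the smooth locus, Proposition~\ref{prop:generator-formula} applied to \(\varphi=r_o^p\) gives
\[
d r_t^p = p r_t^{p-1}\bigl(\Delta_g r_o+\langle b_{\nu_t},\nabla r_o\rangle\bigr)(X_t^\nu)\,dt+p(p-1)r_t^{p-2}\,dt+dM_t .
\]
The martingale \(M_t\) has quadratic variation \(2p^2r_t^{2p-2}dt\), so it is a true martingale up to \(\tau_R\). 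On \(\operatorname{Cut}(o)\) the barrier-sense form of \eqref{eq:radial-bound} gives the inequality, with a nonpositive local-time term. This is Kendall's standard argument, and we handle it as in Remark~\ref{rem:mixed-diagonal}. Near \(o\), where \(r_o\) is not smooth, we use \(p\ge2\) and replace \(r_o^p\) by a smooth modification on \(B(o,1)\). The error this introduces is a bounded constant that goes into \(C_\epsilon\).

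\textbf{Drift estimate.} We bound the three drift contributions separately.
\begin{enumerate}
\item \emph{Laplacian term.} By \eqref{eq:radial-bound}, \(pr^{p-1}\Delta_g r_o\le p r^{p-2}(L_0+L_1 r)\).
\item \emph{Confinement term.} With \(x=o\), \(y=X_t^\nu\) and \(\nabla r_o=\gamma'(r)\), estimate \eqref{eq:coercivity-mV-MV} gives
\[
-\langle\nabla V(y),\gamma'(r)\rangle\le -m_V r+M_V+|\nabla V(o)|.
\]
\item \emph{Interaction term.} Since \(\nabla^{(1)}W(o,o)=0\), Hypothesis~\ref{hyp:interaction}(ii) with \(x=o\), \(z=o\), \(w\in M\), together with \(f(s)\le s\), gives
\[
-\langle\nabla^{(1)}W(y,w),\gamma'(r)\rangle\le \eta\bigl(r+\rho(o,w)\bigr).
\]
Integrating in \(\nu_t(dw)\) yields \(-\langle\mathbf F_{\nu_t}(y),\gamma'(r)\rangle\le\eta r+\eta\int\rho(o,w)\,\nu_t(dw)\).
\end{enumerate}
Combining these, the drift is bounded above by
\[
-p(m_V-\eta)r^p+p\eta r^{p-1}m_1(\nu_t)+(\text{terms of order } r^{p-1},\,r^{p-2}),
\]
where \(m_1(\nu_t)\le \bar m\) by Jensen's inequality.

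We then apply Young's inequality in the form
\[
p r^{p-1}\bar m\le (p-1)r^p+\bar m^p .
\]
The cross term then contributes \(\eta(p-1)r^p+\eta\bar m^p\), so the total \(r^p\) coefficient is
\[
-p(m_V-\eta)+\eta(p-1)=-(p(m_V-2\eta)+\eta).
\]
This matches \(c_\epsilon+\epsilon\). The remaining lower-order terms \(r^{p-1},r^{p-2}\) are absorbed, again by Young's inequality, into \(\epsilon r^p+C_\epsilon\), where \(C_\epsilon\) depends only on \(p,L_0,L_1,m_V,M_V,|\nabla V(o)|,\eta,\epsilon\) and not on \(\nu\). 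The result is
\[
\frac{d}{dt}\mathbb E\bigl[e^{c_\epsilon (t\wedge\tau_R)}r_{t\wedge\tau_R}^p\bigr]\le e^{c_\epsilon t}\bigl(\eta\bar m^p+C_\epsilon\bigr),
\]
which integrates to \eqref{eq:frozen-moment-bound} for the stopped process.

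\textbf{Non-explosion.} From the stopped bound,
\[
R^p\,\mathbb P(\tau_R\le t)\le e^{c_\epsilon t}\mathbb E r_{t\wedge\tau_R}^p,
\]
and the right side is bounded uniformly in \(R\). Hence \(\tau_R\to\infty\) almost surely and the solution is non-explosive. Fatou's lemma then removes the stopping and gives \eqref{eq:frozen-moment-bound}.

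The main obstacle is making the Itô step rigorous at \(\operatorname{Cut}(o)\) and at \(o\). We would first use the barrier definition to approximate \(r_o\) from above by smooth support functions, as in Kendall's semimartingale decomposition of the radial process. A secondary concern is keeping the Young constants independent of \(\nu\), which holds because \(\nu\) enters only through \(\bar m\).
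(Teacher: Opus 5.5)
Your proposal is correct and follows the paper's proof essentially step for step: It\^o's formula for \(r_t^p\) with the radial Laplacian bound, the confinement bound from \eqref{eq:coercivity-mV-MV}, the interaction bound from Hypothesis~\ref{hyp:interaction}\textup{(i)--(ii)}, the Young inequality \(p\eta\bar m\,r^{p-1}\le\eta(p-1)r^p+\eta\bar m^p\), localization at \(\tau_R\), non-explosion, and Fatou. One small caveat: the smooth modification near \(o\) is unnecessary and slightly harmful, because \(r_o^p\) is already \(C^2\) on a ball around \(o\) inside the injectivity radius when \(p\ge2\), whereas a fixed modification with an additive error would lose the exact form of \eqref{eq:frozen-moment-bound} near \(t=0\), which the invariant-ball step in Proposition~\ref{prop:MV-moment-control} relies on.
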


\begin{proof}
	Set \(r_t:=\rho(o,X_t^\nu)\), and let \(\xi\) be the maximal lifetime
	of \(X^\nu\). By the radial It\^o comparison and the first-variation
	formula, for \(t<\xi\),
	\[
	dr_t
	\leq \sqrt2\,dB_t+
	\left[
	\left\langle
	\mathbf F_{\nu_t}(X_t^\nu)+\nabla V(X_t^\nu),
	\gamma_{o,t}'(X_t^\nu)
	\right\rangle
	+\Delta_g r_o(X_t^\nu)
	\right]dt,
	\]
	where \(B\) is a one-dimensional Brownian motion and
	\(\gamma_{o,t}\) is the minimizing geodesic from \(X_t^\nu\) to \(o\).
	Applying It\^o's formula to \(r_t^p\) gives
	\begin{align}
	dr_t^p
	\leq{}&
	p\sqrt2\,r_t^{p-1}\,dB_t
	+p r_t^{p-1}
	\left\langle
	\mathbf F_{\nu_t}(X_t^\nu)+\nabla V(X_t^\nu),
	\gamma_{o,t}'(X_t^\nu)
	\right\rangle dt \nonumber\\
	&+p r_t^{p-1}\Delta_g r_o(X_t^\nu)\,dt
	+p(p-1)r_t^{p-2}\,dt.
	\label{eq:rho-p-moment}
	\end{align}
	Applying \eqref{eq:coercivity-mV-MV} along
	\(\gamma_{o,t}\) gives
	\begin{equation}
	\left\langle\nabla V(X_t^\nu),
	\gamma_{o,t}'(X_t^\nu)\right\rangle
	\leq-m_Vr_t+|\nabla V(o)|+M_V.
	\label{eq:estimate-1-Lp}
	\end{equation}
	By Hypothesis~\ref{hyp:interaction}\textup{(i)--(ii)},
	\(f\leq\rho\), and H\"older's inequality,
	\begin{align}
	\left|
	\left\langle\mathbf F_{\nu_t}(X_t^\nu),
	\gamma_{o,t}'(X_t^\nu)\right\rangle
	\right|
	&\leq\eta\left(r_t+\int_M\rho(o,z)\,\nu_t(dz)\right)\nonumber\\
	&\leq\eta(r_t+\bar m).
	\label{eq:estimate-2-Lp}
	\end{align}
	Moreover, Hypothesis~\ref{hyp:geometry-confinement}\textup{(iii)}
	yields
	\[
	r_t\Delta_g r_o(X_t^\nu)\leq L_0+L_1r_t.
	\]
	Hence, with \(A_o:=|\nabla V(o)|+M_V+L_1\),
	\begin{align*}
	dr_t^p
	\leq{}&
	\sqrt2p r_t^{p-1}\,dB_t\\
	&+\Bigl[
	-p(m_V-\eta)r_t^p
	+p\eta\bar m r_t^{p-1} \\
	&\hspace{1.5cm}
	+pA_or_t^{p-1}
	+p(L_0+p-1)r_t^{p-2}
	\Bigr]dt.
	\end{align*}
	For every \(\epsilon>0\), Young's inequality gives
	\[
	p\eta\bar m\,r^{p-1}\leq\eta(p-1)r^p+\eta\bar m^{\,p},
	\qquad
	pA_or^{p-1}+p(L_0+p-1)r^{p-2}
	\leq\epsilon r^p+C_\epsilon.
	\]
	where \(C_\epsilon\) is independent of \(\nu\). Hence, with
	\[
	c:=p(m_V-\eta)-\eta(p-1)-\epsilon
	=p(m_V-2\eta)+\eta-\epsilon=c_\epsilon
	\]
	and \(K:=\eta\bar m^p+C_\epsilon\), we obtain
	\[
	dr_t^p
	\leq\sqrt2p r_t^{p-1}\,dB_t
	+\bigl(-cr_t^p+K\bigr)dt.
	\]
	Multiplying by \(e^{ct}\) yields
	\[
	d\bigl(e^{ct}r_t^p\bigr)
	\leq\sqrt2p e^{ct}r_t^{p-1}\,dB_t+Ke^{ct}\,dt.
	\]

	For \(R>0\), define
	\[
	\tau_R:=\inf\{t\in[0,\xi):r_t\geq R\}.
	\]
	
	On \([0,t\wedge\tau_R]\), the stochastic integrand is bounded and
	hence has zero expectation. Therefore,
	\begin{equation}
	\mathbb E\!\left[e^{c(t\wedge\tau_R)}
	r_{t\wedge\tau_R}^p\right]
	\leq
	\mathbb E r_0^p+\frac Kc\bigl(e^{ct}-1\bigr).
	\label{eq:frozen-stopped}
	\end{equation}
	On \(\{\tau_R\leq t\}\), one has
	\(r_{t\wedge\tau_R}\geq R\), and thus
	\[
	\mathbb P(\tau_R\leq t)
	\leq
	\frac{1}{R^p}
	\left[
	\mathbb E r_0^p+\frac Kc\bigl(e^{ct}-1\bigr)
	\right]
	\longrightarrow0.
	\]
	Since a finite-time explosion on the complete manifold \(M\) forces
	the path to leave every compact metric ball,
	\[
	\{\xi\leq T\}
	\subseteq\bigcap_{R>0}\{\tau_R\leq T\},
	\]
	and hence \(\mathbb P(\xi\leq T)=0\) for every \(T>0\).
	Thus \(\xi=\infty\) almost surely. Letting \(R\to\infty\) in
	\eqref{eq:frozen-stopped} and applying Fatou's lemma now gives
	\[
	\mathbb E r_t^p
	\leq e^{-ct}\mathbb E r_0^p
	+\frac Kc\bigl(1-e^{-ct}\bigr),
	\]
	which is \eqref{eq:frozen-moment-bound}.
\end{proof}

\begin{proof}[Proof of Proposition~\ref{prop:MV-moment-control}]
	\emph{Step 1: an invariant complete space.}
	For \(T>0\), let
	\[
	\mathcal S_{T,M}:=
	\left\{\nu\in C([0,T];\mathcal P_1(M)):
	\nu_0=\mu_0,\ 
	\sup_{t\leq T}\int_{M}\rho(o,z)^p\,\nu_t(dz)\leq M
	\right\},
	\]
	equipped with
	\[
	d_T(\nu,\widetilde\nu):=
	\sup_{t\leq T}\mathcal W_1(\nu_t,\widetilde\nu_t).
	\]
	Since \((\mathcal P_1(M),\mathcal W_1)\) is complete and the
	\(p\)-moment is lower semicontinuous under weak convergence,
	\((\mathcal S_{T,M},d_T)\) is a nonempty complete metric space.

	For \(\nu\in\mathcal S_{T,M}\), extend \(\nu\) by
	\(\nu_t:=\nu_T\) for \(t>T\), solve \eqref{frozen}, and define
	\[
	\Gamma(\nu)_t:=\mathcal L(X_t^\nu),\qquad 0\leq t\leq T.
	\]
	Lemma~\ref{lem:frozen-moment} and \(\bar m^p\leq M\) give
	\[
	\sup_{t\leq T}\mathbb E\,\rho(o,X_t^\nu)^p
	\leq
	\max\left\{
	\mathbb E\rho(o,\bar X_0)^p,\,
	\frac{\eta M+C_\epsilon}{c_\epsilon}
	\right\}
	\leq M,
	\]
	where the last inequality follows from
	\[
	C_\epsilon
	\leq\bigl(p(m_V-2\eta)-\epsilon\bigr)M
	=(c_\epsilon-\eta)M.
	\]
	Moreover,
	\begin{equation}
	\mathcal W_1\bigl(\Gamma(\nu)_t,\Gamma(\nu)_s\bigr)
	\leq\mathbb E\rho(X_t^\nu,X_s^\nu).
	\label{eq:W1-by-coupling}
	\end{equation}
	Path continuity gives convergence in probability as \(t\to s\), while
	the uniform \(p\)-moment bound, with \(p>1\), gives uniform
	integrability. Hence the right-hand side tends to zero, and therefore
	\(\Gamma(\nu)\in\mathcal S_{T,M}\).

	\emph{Step 2: contraction.}
	Let \(\nu,\widetilde\nu\in\mathcal S_{T,M}\), and couple
	\(X:=X^\nu\) and \(Y:=X^{\widetilde\nu}\), starting from the same
	initial variable, by the mixed coupling of
	Definition~\ref{def:mixed}. Set \(r_t:=\rho(X_t,Y_t)\), so that
	\(r_0=0\). Proposition~\ref{prop:mixed-dist} gives
	\[
	dr_t\leq
	2\sqrt2\,\phi_r^\delta(r_t)\,d\beta_t+
	\left[
	H(r_t)+
	\langle b_{\widetilde\nu_t}(Y_t),\gamma_t'(Y_t)\rangle-
	\langle b_{\nu_t}(X_t),\gamma_t'(X_t)\rangle
	\right]dt.
	\]
	Insert and subtract
	\(\mathbf F_{\nu_t}(Y_t)\) in the drift difference.
	Hypothesis~\ref{hyp:geometry-confinement}\textup{(i)},
	Hypothesis~\ref{hyp:interaction}\textup{(ii)}, and
	Lemma~\ref{lem:interaction-field} then yield
	\[
	\begin{aligned}
	&\langle b_{\widetilde\nu_t}(Y_t),\gamma_t'(Y_t)\rangle-
	\langle b_{\nu_t}(X_t),\gamma_t'(X_t)\rangle\\
	&\qquad\leq-r_tC_V(r_t)+\eta f(r_t)
	+\eta\mathcal W_1(\nu_t,\widetilde\nu_t).
	\end{aligned}
	\]
	Recalling \(D(r)=rC_V(r)-H(r)\), we obtain
	\begin{equation}
	dr_t\leq
	2\sqrt2\,\phi_r^\delta(r_t)\,d\beta_t+
	\left[-D(r_t)+\eta f(r_t)
	+\eta\mathcal W_1(\nu_t,\widetilde\nu_t)\right]dt,
	\qquad r_0=0.
	\label{eq:contraction-D}
	\end{equation}
	Since \(f'\) is absolutely continuous, the It\^o--Tanaka formula,
	\eqref{eq:contraction-D}, and
	\(d\langle r\rangle_t=8\phi_r^\delta(r_t)^2dt\) give
	\[
	\begin{aligned}
	df(r_t)\leq{}&
	2\sqrt2\,\phi_r^\delta(r_t)f'(r_t)\,d\beta_t\\
	&+\Bigl[
	-D(r_t)f'(r_t)+4f''(r_t)\phi_r^\delta(r_t)^2
	+\eta f(r_t)f'(r_t)
	+\eta f'(r_t)\mathcal W_1(\nu_t,\widetilde\nu_t)
	\Bigr]dt.
	\end{aligned}
	\]
	Lemma~\ref{lem:cost-defect} and \(0<f'\leq1\) therefore yield
	\begin{equation}
	df(r_t)\leq
	2\sqrt2\,\phi_r^\delta(r_t)f'(r_t)\,d\beta_t+
	\left[-(C_f-\eta)f(r_t)
	+\eta\mathcal W_1(\nu_t,\widetilde\nu_t)+e(\delta)\right]dt.
	\label{eq:contraction-f}
	\end{equation}
	The stochastic integrand is bounded, so taking expectations and applying
	Gr\"onwall's inequality with \(a:=(\eta-C_f)_+\) gives
	\[
	\mathbb Ef(r_t)
	\le
	\eta\int_0^t e^{a(t-s)}
	\mathcal W_1(\nu_s,\widetilde\nu_s)\,ds
	+e(\delta)t e^{at},
	\]
	The law of each marginal of the mixed coupling is independent of
	\(\delta\). Hence
	\[
	W_f\bigl(\Gamma(\nu)_t,\Gamma(\widetilde\nu)_t\bigr)
	\leq\mathbb Ef(r_t).
	\]
	Letting \(\delta\downarrow0\) and using
	Lemma~\ref{lem:cost-defect}, we obtain
	\begin{equation}
	W_f\bigl(\Gamma(\nu)_t,\Gamma(\widetilde\nu)_t\bigr)
	\leq
	\eta\int_0^t e^{a(t-s)}
	\mathcal W_1(\nu_s,\widetilde\nu_s)\,ds.
	\label{eq:Wf-contraction}
	\end{equation}
	Since \(c_0r\leq f(r)\),
	\begin{equation}
	d_T\bigl(\Gamma(\nu),\Gamma(\widetilde\nu)\bigr)
	\leq\frac{\eta T e^{aT}}{c_0}\,
	d_T(\nu,\widetilde\nu).
	\label{eq:fixed-point-contraction}
	\end{equation}
	Choose \(T_*>0\), depending only on \(\eta,C_f,c_0\), such that
	\(\eta T_*e^{aT_*}/c_0\leq\tfrac12\). Then \(\Gamma\) is a contraction
	on \(\mathcal S_{T_*,M}\), so it has a unique fixed point
	\(\mu\). The corresponding frozen solution satisfies
	\(\mu_t=\mathcal L(X_t^\mu)\) and hence solves
	\eqref{eq:McKean-Vlasov-SDE} on \([0,T_*]\).

	\emph{Step 3: iteration.}
	Since \(\mu\in\mathcal S_{T_*,M}\),
	\[
	\int_{M}\rho(o,z)^p\,\mu_{T_*}(dz)\leq M.
	\]
	The construction can therefore be restarted at \(T_*\) with the same
	moment radius \(M\) and the same time step \(T_*\). Iteration yields a
	global non-explosive strong solution satisfying
	\eqref{eq:MV-uniform-moment}.

	\emph{Step 4: uniqueness.}
	Let \(\bar X^1,\bar X^2\) be two solutions with the same initial law,
	and suppose that their law flows \(\mu,\widetilde\mu\) belong to
	\(C([0,\infty);\mathcal P_1(M))\).
	Repeating the coupling estimate of Step 2 on \([0,T_*]\) gives
	\[
	d_{T_*}(\mu,\widetilde\mu)
	\leq\frac{\eta T_*e^{aT_*}}{c_0}
	d_{T_*}(\mu,\widetilde\mu)
	\leq\frac12d_{T_*}(\mu,\widetilde\mu).
	\]
	Hence \(\mu=\widetilde\mu\) on \([0,T_*]\), and iteration gives equality
	on \([0,\infty)\). If the two solutions have the same initial variable
	and are driven by the same Brownian motion, they solve the same frozen
	SDE with the common law flow. Pathwise uniqueness for that equation
	then gives \(\bar X^1=\bar X^2\) almost surely.
\end{proof}

\begin{corollary}[Particle system]\label{cor:particle-moment}
	Assume Hypotheses~\ref{hyp:geometry-confinement} and
	\ref{hyp:interaction}, with \(m_V>2\eta\). For \(N\geq1\), suppose
	\[
	\frac1N\sum_{i=1}^N
	\mathbb E\rho(o,X_0^{i,N})^p<\infty,
	\qquad p\geq2,
	\]
	and set \(\rho_i(t):=\rho(o,X_t^{i,N})\).
	Then \eqref{eq:particle-system} has a unique non-explosive strong
	solution and, for every
	\(\epsilon\in(0,p(m_V-2\eta))\),
	\[
	\sup_{t\geq0}\ \frac1N\sum_{i=1}^N\mathbb E\,\rho_i(t)^p
	\leq
	\max\left\{
	\frac1N\sum_{i=1}^N\mathbb E\rho_i(0)^p,\,
	\frac{C_\epsilon}{p(m_V-2\eta)-\epsilon}
	\right\},
	\]
	where \(C_\epsilon\) is independent of \(N\).
\end{corollary}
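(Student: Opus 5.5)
The plan is to transfer the proof of Lemma~\ref{lem:frozen-moment} to the $N$-particle system: replace the single radial process by the averaged moment $\frac1N\sum_i\rho_i(t)^p$, and use symmetry of the empirical interaction in place of the frozen bound $\bar m$.

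\emph{Local existence.} The system \eqref{eq:particle-system} is a diffusion on $M^N$ with drift
\[
\bigl(b^i(\mathbf x)\bigr)_{i=1}^N,\qquad b^i(\mathbf x):=-\nabla V(x_i)-\frac1N\sum_j\nabla_g^{(1)}W(x_i,x_j),
\]
which is $C^1$ since $V\in C^2(M)$ and $W\in C^2(M\times M)$. Lifting to $O(M)^N$ as in \eqref{eq:lifted-SDE-drift} yields a unique maximal strong solution up to a lifetime $\xi$.

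\emph{Radial estimate.} For $t<\xi$, the radial It\^o comparison used in the proof of Lemma~\ref{lem:frozen-moment} gives, for each $i$,
\[
d\rho_i^p\le p\sqrt2\rho_i^{p-1}dB^i_t+\Bigl[p\rho_i^{p-1}\bigl\langle \nabla V(X^i)+\tfrac1N\textstyle\sum_j\nabla_g^{(1)}W(X^i,X^j),\gamma'_{o,i}\bigr\rangle+p\rho_i^{p-1}\Delta_g r_o(X^i)+p(p-1)\rho_i^{p-2}\Bigr]dt.
\]
The confinement term is bounded by \eqref{eq:estimate-1-Lp}, and the Laplacian term by Hypothesis~\ref{hyp:geometry-confinement}(iii). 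For the interaction term, apply Hypothesis~\ref{hyp:interaction}(i)--(ii) with the pair $(x,y)=(X^i,o)$ and $(z,w)=(X^j,o)$. This uses $\nabla_g^{(1)}W(o,o)=0$, exactly as in \eqref{eq:estimate-2-Lp}, and gives
\[
\Bigl|\bigl\langle \tfrac1N\textstyle\sum_j\nabla_g^{(1)}W(X^i,X^j),\gamma'_{o,i}\bigr\rangle\Bigr|\le \eta\Bigl(\rho_i+\tfrac1N\textstyle\sum_j\rho_j\Bigr).
\]

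\emph{Averaging (the step where care is needed).} After averaging over $i$, the cross term is
\[
\frac{p\eta}{N^2}\sum_{i,j}\rho_i^{p-1}\rho_j.
\]
Young's inequality gives $\rho_i^{p-1}\rho_j\le\frac{p-1}{p}\rho_i^p+\frac1p\rho_j^p$, so after averaging the cross term is at most $\eta\,\frac1N\sum_i\rho_i^p$. Combined with the $p\eta\rho_i^p$ term, the total interaction contribution is $\eta(p+1)S_t$, where $S_t:=\frac1N\sum_i\rho_i^p$. This is compatible with the frozen computation: there, $\eta(p-1)r^p+\eta\bar m^p$ was split into a part absorbed in the rate and a constant. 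Here nothing is frozen, so the absorbed coefficient is $p\eta+\eta$, which is at most $2p\eta$ because $p\ge1$. The lower-order terms are absorbed with the same $\epsilon$-Young bound, so the same constant $C_\epsilon$ appears, and it does not depend on $N$. The result is
\[
dS_t\le dM_t+\bigl(-(p(m_V-2\eta)-\epsilon)S_t+C_\epsilon\bigr)dt.
\]
The rate $\kappa:=p(m_V-2\eta)-\epsilon$ is positive by the hypothesis $m_V>2\eta$.

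\emph{Stopping, non-explosion and the final bound.} Stop at $\tau_R:=\inf\{t:\max_i\rho_i\ge R\}$. The local martingale $M$ is then bounded and has zero expectation, so
\[
\mathbb E\bigl[e^{\kappa(t\wedge\tau_R)}S_{t\wedge\tau_R}\bigr]\le \mathbb E S_0+\frac{C_\epsilon}{\kappa}\bigl(e^{\kappa t}-1\bigr).
\]
On $\{\tau_R\le t\}$ one has $S_{t\wedge\tau_R}\ge R^p/N$. For fixed $N$ this gives $\mathbb P(\tau_R\le t)\to0$ as $R\to\infty$, and hence $\xi=\infty$ exactly as in Lemma~\ref{lem:frozen-moment}. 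Fatou's lemma then yields
\[
\mathbb E S_t\le e^{-\kappa t}\mathbb E S_0+\frac{C_\epsilon}{\kappa}\bigl(1-e^{-\kappa t}\bigr)\le\max\Bigl\{\mathbb E S_0,\frac{C_\epsilon}{\kappa}\Bigr\},
\]
which is the claimed bound.

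Uniqueness is the pathwise uniqueness of the lifted locally Lipschitz SDE, given non-explosion. The cut-locus issues are handled by the same barrier and localization conventions as in Hypothesis~\ref{hyp:geometry-confinement}(iii) and Remark~\ref{rem:mixed-diagonal}.
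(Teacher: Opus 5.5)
This is essentially the paper's proof. The steps match one for one: the coordinatewise radial It\^o bound, Young's inequality on the double sum, the averaged inequality with rate $p(m_V-2\eta)-\epsilon$ and an $N$-independent $C_\epsilon$, stopping at $\max_i\rho_i\ge R$ using $S_{t\wedge\tau_R}\ge R^p/N$, and Fatou.

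There is one arithmetic slip. Young's inequality gives $\frac{p\eta}{N^2}\sum_{i,j}\rho_i^{p-1}\rho_j\le p\eta S_t$, not $\eta S_t$, since $\frac{p-1}{p}+\frac1p=1$; taking all $\rho_i$ equal shows the bound is attained. So the interaction contributes exactly $2p\eta S_t$, not $(p+1)\eta S_t$. Since $2p\eta$ is the coefficient you end up using anyway, your final inequality and rate are unaffected.
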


\begin{proof}
	Since \(V,W\in C^2\), the coefficients of the finite-dimensional SDE
	on \(M^N\) are locally Lipschitz; hence a unique strong solution
	exists up to its maximal lifetime. Set
	\[
	A_o:=|\nabla V(o)|+M_V+L_1,
	\qquad
	Y_t:=\frac1N\sum_{i=1}^N\rho_i(t)^p.
	\]
	The coordinatewise calculation from Lemma~\ref{lem:frozen-moment}
	gives
	\[
	\begin{aligned}
	d\rho_i^p
	\leq{}&
	\sqrt2p\rho_i^{p-1}\,d\beta_t^i\\
	&
	+p\rho_i^{p-1}\left[
	-m_V\rho_i+A_o+\frac\eta N\sum_{j=1}^N\bigl(\rho_i+\rho_j\bigr)
	\right]dt
	+p(L_0+p-1)\rho_i^{p-2}\,dt.
	\end{aligned}
	\]
	Moreover, by Young's inequality,
	\[
	\frac1{N^2}\sum_{i,j=1}^N\rho_i^{p-1}\rho_j
	\leq
	\frac1N\sum_{i=1}^N\rho_i^p=Y_t.
	\]
	Thus the interaction contributes at most \(2p\eta Y_t\). Applying the
	remaining Young inequalities before averaging gives
	\[
	dY_t\leq d\mathcal M_t+
	\left[-c_\epsilon Y_t+C_\epsilon\right]dt,
	\qquad
	c_\epsilon:=p(m_V-2\eta)-\epsilon>0,
	\]
	where \(\mathcal M\) is a local martingale and \(C_\epsilon\) is
	independent of \(N\).
	Multiplying by \(e^{c_\epsilon t}\) gives
	\[
	d\bigl(e^{c_\epsilon t}Y_t\bigr)
	\leq e^{c_\epsilon t}d\mathcal M_t
	+C_\epsilon e^{c_\epsilon t}dt.
	\]
	Let \(\xi_N\) be the maximal lifetime and define
	\[
	\tau_\ell:=
	\inf\left\{t\in[0,\xi_N):
	\max_{1\leq i\leq N}\rho_i(t)\geq\ell\right\}.
	\]
	The stopped stochastic integrand is bounded, so
	\[
	\mathbb E\!\left[
	e^{c_\epsilon(t\wedge\tau_\ell)}
	Y_{t\wedge\tau_\ell}\right]
	\leq
	\mathbb E\,Y_0+\frac{C_\epsilon}{c_\epsilon}
	\bigl(e^{c_\epsilon t}-1\bigr).
	\]
	On \(\{\tau_\ell\leq t\}\),
	\(Y_{t\wedge\tau_\ell}\geq\ell^p/N\); hence
	\[
	\mathbb P(\tau_\ell\leq t)
	\leq
	\frac{N}{\ell^p}
	\left[
	\mathbb EY_0+
	\frac{C_\epsilon}{c_\epsilon}
	\bigl(e^{c_\epsilon t}-1\bigr)
	\right]
	\longrightarrow0.
	\]
	Completeness of \(M^N\) therefore implies
	\(\xi_N=\infty\) almost surely. Letting \(\ell\to\infty\) and applying
	Fatou's lemma gives
	\[
	\mathbb EY_t
	\leq e^{-c_\epsilon t}\mathbb EY_0
	+\frac{C_\epsilon}{c_\epsilon}\bigl(1-e^{-c_\epsilon t}\bigr),
	\]
	which proves the claim.
\end{proof}

\section{Propagation of chaos}\label{sec:chaos}
We are now ready to prove the following propagation of chaos.

\begin{theorem}\label{thm:propagation-chaos}
	Assume Hypotheses~\ref{hyp:geometry-confinement}
	and~\ref{hyp:interaction}, with \(m_V>2\eta\) and \(C_f>2\eta\).
	Let \(\mu_0,\nu_0\in\mathcal P_2(M)\), let \(\mu_t\) be the law of the
	McKean--Vlasov solution starting from \(\mu_0\), and set
	\[
	C_2:=\sup_{t\ge0}
	\left(\mathbb E\rho(o,\bar X_t^i)^2\right)^{1/2}<\infty,
	\qquad
	\lambda:=C_f-2\eta>0.
	\]
	Suppose \(X_0^{1,N},\ldots,X_0^{N,N}\) are i.i.d. with law \(\nu_0\).
	For \(\xi\in\Pi(\mu_0,\nu_0)\), choose the initial pairs
	\((\bar X_0^i,X_0^{i,N})\) independently with common law \(\xi\).
	Then the mixed coupling with cutoff \(\delta>0\) satisfies
	\begin{align}
	\frac1N\sum_{i=1}^N
	\mathbb Ef\bigl(\rho(\bar X_t^i,X_t^{i,N})\bigr)
	\leq{}&
	e^{-\lambda t}
	\frac1N\sum_{i=1}^N
	\mathbb Ef\bigl(\rho(\bar X_0^i,X_0^{i,N})\bigr)\nonumber\\
	&+\frac{1-e^{-\lambda t}}{\lambda}
	\left(\frac{4\sqrt2\,\eta C_2}{\sqrt N}+e(\delta)\right).
	\label{eq:chaos-coordinate-estimate}
	\end{align}
	Consequently, letting \(\delta\downarrow0\) and optimizing over \(\xi\),
	\begin{equation}
	W_f\bigl(\mu_t^{N,\nu_0},\mu_t\bigr)
	\leq e^{-\lambda t}W_f(\nu_0,\mu_0)
	+\frac{4\sqrt2\,\eta C_2}{\lambda\sqrt N},
	\label{eq:chaos-one-particle}
	\end{equation}
	where \(\mu_t^{N,\nu_0}:=\mathcal L(X_t^{i,N})\). Moreover,
	\begin{equation}
	W_{\ell^1(f)}
	\left(
	\mathcal L(X_t^{1,N},\ldots,X_t^{N,N}),
	\mu_t^{\otimes N}
	\right)
	\leq e^{-\lambda t}W_f(\nu_0,\mu_0)
	+\frac{4\sqrt2\,\eta C_2}{\lambda\sqrt N}.
	\label{eq:chaos-wasserstein-estimate}
	\end{equation}
\end{theorem}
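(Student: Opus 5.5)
The argument couples each nonlinear copy \(\bar X^i\) with the particle \(X^{i,N}\) via the mixed coupling of Definition~\ref{def:mixed}, with independent driving noise for different indices \(i\). For each \(i\), set \(r^i_t:=\rho(\bar X_t^i,X_t^{i,N})\). Proposition~\ref{prop:mixed-dist} with \(b_X=b_{\mu_t}\) and \(b_Y=-\nabla V-\frac1N\sum_j\nabla^{(1)}_gW(\cdot,X^{j,N}_t)\) gives a semimartingale inequality for \(r^i_t\). The confinement contributes \(-r^i_tC_V(r^i_t)\) by Hypothesis~\ref{hyp:geometry-confinement}(i), and the index term contributes at most \(H(r^i_t)\), so together they give \(-D(r^i_t)\). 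For the interaction part, insert the empirical measure of the nonlinear copies, \(\bar\mu^N_t:=\frac1N\sum_j\delta_{\bar X^j_t}\), and split the interaction difference into two terms:
\[
\Bigl\langle \tfrac1N\textstyle\sum_j \nabla^{(1)}_gW(\bar X^i_t,\bar X^j_t),\gamma'(0)\Bigr\rangle-\Bigl\langle \tfrac1N\textstyle\sum_j \nabla^{(1)}_gW(X^{i,N}_t,X^{j,N}_t),\gamma'(r)\Bigr\rangle
\]
and \(\langle \mathbf F_{\mu_t}(\bar X^i_t)-\mathbf F_{\bar\mu^N_t}(\bar X^i_t),\gamma'(0)\rangle\). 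By Hypothesis~\ref{hyp:interaction}(ii), the first term is bounded by \(\eta[f(r^i_t)+\frac1N\sum_j f(r^j_t)]\).

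Next I apply It\^o--Tanaka to \(f(r^i_t)\), exactly as in Step 2 of the proof of Proposition~\ref{prop:MV-moment-control}. Lemma~\ref{lem:cost-defect} together with \(0<f'\le1\) then yields
\[
d f(r^i_t)\le dM^i_t+\Bigl[-C_ff(r^i_t)+\eta f(r^i_t)+\tfrac{\eta}{N}\textstyle\sum_j f(r^j_t)+\bigl|\mathbf F_{\mu_t}(\bar X^i_t)-\mathbf F_{\bar\mu^N_t}(\bar X^i_t)\bigr|+e(\delta)\Bigr]dt .
\]
Averaging over \(i\) gives a drift bounded by \(-(C_f-2\eta)\) times the average, plus the fluctuation and defect terms. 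The martingale terms have bounded integrands, so they vanish in expectation. Gr\"onwall then gives \eqref{eq:chaos-coordinate-estimate}, provided I show that \(\mathbb E|\mathbf F_{\mu_t}(\bar X^i_t)-\mathbf F_{\bar\mu^N_t}(\bar X^i_t)|\le 4\sqrt2\,\eta C_2/\sqrt N\) uniformly in \(t\). Cut-locus issues are handled as in Remark~\ref{rem:mixed-diagonal}.

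The main obstacle is this law-of-large-numbers estimate on a manifold. The vectors lie in \(T_{\bar X^i_t}M\), so I condition on \(\bar X^i_t=x\), which is legitimate because the copies are independent. The \(j=i\) term vanishes by Hypothesis~\ref{hyp:interaction}(i), and its removal costs an \(O(1/N)\) correction controlled by the mean of \(\mathbf F\). For \(j\ne i\), set \(\zeta_j:=\nabla^{(1)}_gW(x,\bar X^j_t)-\mathbf F_{\mu_t}(x)\in T_xM\). These are i.i.d. and centered. Cauchy--Schwarz and orthogonality in the fixed Hilbert space \(T_xM\) bound the mean norm by \(N^{-1/2}(\mathbb E|\zeta|^2)^{1/2}\). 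By Hypothesis~\ref{hyp:interaction}(iii) and \(f\le\rho\), \(|\zeta_j|\le\eta\,\mathbb E'[\rho(\bar X^j_t,\bar X')]\le\eta(\rho(o,\bar X^j_t)+C_2)\), where \(\bar X'\) is an independent copy. This gives a second moment of order \(\eta^2C_2^2\). Combining the \(1/N\) diagonal correction with the \(\sqrt{N-1}/N\) fluctuation bound, and bookkeeping the factors crudely, yields the constant \(4\sqrt2\,\eta C_2/\sqrt N\). The finiteness of \(C_2\) comes from Proposition~\ref{prop:MV-moment-control} with \(p=2\), which uses \(m_V>2\eta\).

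For the consequences: the marginal laws do not depend on \(\delta\), so letting \(\delta\downarrow0\) removes \(e(\delta)\). Exchangeability makes each \(\mathbb Ef(r^i_t)\) equal to the average, and \(\mathbb Ef(r^i_0)=\int f\,d\xi\). Taking the infimum over \(\xi\) gives \eqref{eq:chaos-one-particle}. Since \((\bar X^1_t,\dots,\bar X^N_t)\) has law \(\mu_t^{\otimes N}\), Proposition~\ref{prop:coordinate-coupling-bound} gives \eqref{eq:chaos-wasserstein-estimate}.
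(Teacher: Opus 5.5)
Your proposal is correct and follows the paper's proof essentially step for step. The shared structure is: a per-index mixed coupling; the same split of the drift difference through the empirical measure of the nonlinear copies (the paper's $I^i_{1,t}$ and $I^i_{2,t}$); Hypothesis~\ref{hyp:interaction}(ii) with Lemma~\ref{lem:cost-defect} for the first piece; a conditional law-of-large-numbers bound for the second; averaging and Gr\"onwall; and Proposition~\ref{prop:coordinate-coupling-bound} at the end. The only difference is bookkeeping in the fluctuation estimate: you peel off the diagonal as $\frac1N|\mathbf F_{\mu_t}(\bar X^i_t)|$ and bound $|\zeta_j|$ pointwise, whereas the paper renormalizes by $N-1$ and applies Jensen on $\mu_t\otimes\mu_t$, and both stay within $4\sqrt2\,\eta C_2/\sqrt N$.
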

\begin{remark}\label{rem:cf-balance}
	The rate \(\lambda=C_f-2\eta\) expresses the competition among
	confinement, geometry, and interaction. Indeed,
	\[
	D(r)=rC_V(r)-H(r),
	\]
	so stronger confinement increases the net dissipation, whereas a larger
	endpoint-index contribution \(H\), reflecting stronger geodesic
	separation, decreases it. The cost construction converts \(D\) into
	\(C_f\), and the interaction produces the loss \(2\eta\).
	Remark~\ref{rem:cf-size} gives a quantitative lower bound for \(C_f\).
\end{remark}
\begin{proof}
	Proposition~\ref{prop:MV-moment-control} with $p=2$ and
	Corollary~\ref{cor:particle-moment} give global well-posedness and the
	uniform bound defining $C_2$.
	Fix \(\xi\in\Pi(\mu_0,\nu_0)\) and construct the initial pairs with
	joint law \(\xi^{\otimes N}\).  In particular, the nonlinear initial
	variables are i.i.d. with law \(\mu_0\), the particle initial variables
	are i.i.d. with law \(\nu_0\), and
	\[
	\frac1N\sum_{i=1}^N\mathbb E f\bigl(\rho(\bar X_0^i,X_0^{i,N})\bigr)
	=
	\int_{M\times M}f(\rho(x,y))\,\xi(dx,dy).
	\]
	Fix \(\delta>0\) and couple each pair by the mixed coupling of
	Definition~\ref{def:mixed}, and set
	\[
	\rho_t^i:=\rho(\bar X_t^i,X_t^{i,N}).
	\]
	Let \(\gamma_t^i\) be the minimizing geodesic from
	\(\bar X_t^i\) to \(X_t^{i,N}\), and define
	\[
	\begin{aligned}
	A_t^i:={}&
	\left\langle
	\frac1N\sum_{j=1}^N
	\nabla_g^{(1)}W(X_t^{i,N},X_t^{j,N}),
	(\gamma_t^i)'(X_t^{i,N})
	\right\rangle\\
	&-
	\left\langle
	\mathbf F_{\mu_t}(\bar X_t^i),
	(\gamma_t^i)'(\bar X_t^i)
	\right\rangle.
	\end{aligned}
	\]
	Proposition~\ref{prop:mixed-dist} and
	Hypothesis~\ref{hyp:geometry-confinement}\textup{(i)--(ii)} give
	\begin{align}
	df(\rho_t^i)\leq{}&
	2\sqrt2\,\phi_r^\delta(\rho_t^i)f'(\rho_t^i)\,d\beta_t^i\nonumber\\
	&+\left[
	4f''(\rho_t^i)\phi_r^\delta(\rho_t^i)^2
	-D(\rho_t^i)f'(\rho_t^i)-f'(\rho_t^i)A_t^i
	\right]dt.
	\label{eq:f-rho}
	\end{align}
	By Lemma~\ref{lem:cost-defect},
	\begin{equation}
	4f''(\rho_t^i)\phi_r^\delta(\rho_t^i)^2
	-D(\rho_t^i)f'(\rho_t^i)
	\leq-C_ff(\rho_t^i)+e(\delta).
	\label{eq:ODE-estimate}
	\end{equation}
	Decompose
	\[
	A_t^i=I_{1,t}^i+I_{2,t}^i,
	\]
	where
	\[
	\begin{aligned}
	I_{1,t}^i
	:=\frac1N\sum_{j=1}^N\Bigl[
	&\left\langle
	\nabla_g^{(1)}W(X_t^{i,N},X_t^{j,N}),
	(\gamma_t^i)'(X_t^{i,N})
	\right\rangle\\
	&-
	\left\langle
	\nabla_g^{(1)}W(\bar X_t^i,\bar X_t^j),
	(\gamma_t^i)'(\bar X_t^i)
	\right\rangle
	\Bigr]
	\end{aligned}
	\]
	and
	\[
	I_{2,t}^i
	:=
	\left\langle
	\frac1N\sum_{j=1}^N
	\nabla_g^{(1)}W(\bar X_t^i,\bar X_t^j)
	-\mathbf F_{\mu_t}(\bar X_t^i),
	(\gamma_t^i)'(\bar X_t^i)
	\right\rangle.
	\]
	For \(j\neq i\), set
	\[
	Z_j:=\nabla_g^{(1)}W(\bar X_t^i,\bar X_t^j),
	\qquad
	m_i:=\mathbf F_{\mu_t}(\bar X_t^i).
	\]
	Conditionally on \(\bar X_t^i\), the variables \(Z_j\), \(j\neq i\),
	are i.i.d. with mean \(m_i\). Hence
	\[
	\mathbb E\!\left[
	\left|m_i-\frac1{N-1}\sum_{j\neq i}Z_j\right|^2
	\,\middle|\,\bar X_t^i\right]
	=
	\frac1{N-1}
	\mathbb E\!\left[|Z_j-m_i|^2\mid\bar X_t^i\right].
	\]
	Using an independent pair \(x,y\) with law
	\(\mu_t\otimes\mu_t\), Hypothesis~\ref{hyp:interaction}\textup{(iii)}
	gives
	\begin{align}
	\mathbb E\left|
	m_i-\frac1{N-1}\sum_{j\neq i}Z_j
	\right|^2
	&\leq
	\frac{\eta^2}{N-1}
	\int_{M\times M}\rho(x,y)^2\,\mu_t(dx)\mu_t(dy)\nonumber\\
	&\leq\frac{4\eta^2C_2^2}{N-1}.
	\label{2nd}
	\end{align}
	Since \(Z_i=0\),
	\[
	\left|\frac1N\sum_{j=1}^NZ_j-m_i\right|
	\leq
	\left|\frac1{N-1}\sum_{j\neq i}Z_j-m_i\right|
	+\frac1{N(N-1)}\left|\sum_{j\neq i}Z_j\right|.
	\]
	Moreover, Hypothesis~\ref{hyp:interaction}\textup{(i),(iii)} gives
	\(\mathbb E|Z_j|\leq2\eta C_2\). Therefore,
	\begin{equation}
	\mathbb E|I_{2,t}^i|
	\leq
	\frac{2\eta C_2}{\sqrt{N-1}}
	+\frac{2\eta C_2}{N}
	\leq\frac{4\sqrt2\,\eta C_2}{\sqrt N}.
	\label{eq:I-2}
	\end{equation}
	Hypothesis~\ref{hyp:interaction}\textup{(ii)} gives
	\begin{equation}
	|I_{1,t}^i|
	\leq
	\eta f(\rho_t^i)
	+\frac{\eta}{N}\sum_{j=1}^N f(\rho_t^j).
	\label{eq:I-1}
	\end{equation}
	Set
	\[
	u_N(t):=\frac1N\sum_{i=1}^N\mathbb Ef(\rho_t^i).
	\]
	Combining \eqref{eq:f-rho}, \eqref{eq:ODE-estimate},
	\eqref{eq:I-1}, and \eqref{eq:I-2}, using \(f'\leq1\), and averaging
	over \(i\), we obtain
	\[
	u_N'(t)
	\leq-\lambda u_N(t)
	+\frac{4\sqrt2\,\eta C_2}{\sqrt N}+e(\delta).
	\]
	Gr\"onwall's inequality gives \eqref{eq:chaos-coordinate-estimate}.

	For each \(i\), the law of
	\((X_t^{i,N},\bar X_t^i)\) couples
	\(\mu_t^{N,\nu_0}\) and \(\mu_t\), while the joint law of the two
	configuration vectors couples
	\[
	\mathcal L(X_t^{1,N},\ldots,X_t^{N,N})
	\quad\text{and}\quad
	\mu_t^{\otimes N}.
	\]
	Proposition~\ref{prop:coordinate-coupling-bound} therefore bounds both
	transportation costs by \(u_N(t)\). Since the marginal laws are
	independent of \(\delta\), letting \(\delta\downarrow0\) and then taking
	the infimum over \(\xi\in\Pi(\mu_0,\nu_0)\) proves
	\eqref{eq:chaos-one-particle} and
	\eqref{eq:chaos-wasserstein-estimate}.
\end{proof}
	
\section{Convergence to equilibrium}\label{sec:equilibrium}

We now prove exponential contraction between two McKean--Vlasov law
flows.

\begin{theorem}[Exponential contraction]\label{thm:convergence-equilibrium}
	Assume Hypotheses~\ref{hyp:geometry-confinement} and
	\ref{hyp:interaction}, with \(m_V>2\eta\) and \(C_f>2\eta\), and set
	\[
	\lambda:=C_f-2\eta>0.
	\]
	Let \(\mu_t\) and \(\nu_t\) be the law flows of the McKean--Vlasov
	solutions starting from \(\mu_0,\nu_0\in\mathcal P_2(M)\).
	For every \(\pi\in\Pi(\mu_0,\nu_0)\) and \(\delta>0\), there exists a
	mixed coupling \((X_t^\delta,Y_t^\delta)\), with initial law \(\pi\)
	and marginals \(\mu_t,\nu_t\), such that
	\begin{equation}
	\mathbb Ef\bigl(\rho(X_t^\delta,Y_t^\delta)\bigr)
	\leq e^{-\lambda t}
	\mathbb Ef\bigl(\rho(X_0^\delta,Y_0^\delta)\bigr)
	+\frac{e(\delta)}{\lambda}\bigl(1-e^{-\lambda t}\bigr).
	\label{eq:equilibrium-coordinate-contraction}
	\end{equation}
	Consequently,
	\begin{equation}
	W_f(\mu_t,\nu_t)
	\leq e^{-\lambda t}W_f(\mu_0,\nu_0).
	\label{eq:equilibrium-Wf-contraction}
	\end{equation}
\end{theorem}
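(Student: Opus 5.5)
The plan is to mirror the proof of Theorem~\ref{thm:propagation-chaos}, now with no empirical-measure error. Proposition~\ref{prop:MV-moment-control} with \(p=2\) gives well-posedness and uniform second moments for both flows \(\mu_t,\nu_t\), so each belongs to \(C([0,\infty);\mathcal P_1(M))\).

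We first freeze the two law flows. Let \(X^\delta\) solve the frozen SDE~\eqref{frozen} with drift \(b_{\mu_t}\), and \(Y^\delta\) the frozen SDE with drift \(b_{\nu_t}\). Take initial pair \((X_0,Y_0)\sim\pi\) and couple the driving noises by the mixed coupling of Definition~\ref{def:mixed} with cutoff \(\delta\). By Lévy's characterization both drivers are Brownian motions, and the frozen equations are pathwise unique. Hence \(\mathcal L(X_t^\delta)=\mu_t\) and \(\mathcal L(Y_t^\delta)=\nu_t\) by the uniqueness part of Proposition~\ref{prop:MV-moment-control}; that is, each marginal is the McKean--Vlasov solution.

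Set \(r_t=\rho(X_t^\delta,Y_t^\delta)\). Proposition~\ref{prop:mixed-dist}, Hypothesis~\ref{hyp:geometry-confinement}(i)--(ii), and the Itô--Tanaka formula give, exactly as in \eqref{eq:f-rho},
\[
df(r_t)\le 2\sqrt2\,\phi_r^\delta f'(r_t)\,d\beta_t+\bigl[4f''\phi_r^{\delta\,2}-Df'-f'A_t\bigr]dt,
\]
where \(A_t=\langle\mathbf F_{\nu_t}(Y_t),\gamma_t'(r_t)\rangle-\langle\mathbf F_{\mu_t}(X_t),\gamma_t'(0)\rangle\). We bound \(|A_t|\) by adding and subtracting \(\mathbf F_{\mu_t}(Y_t)\). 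Hypothesis~\ref{hyp:interaction}(ii), integrated against a coupling of \(\mu_t\) with itself on the diagonal (take \(z=w\)), gives \(\eta f(r_t)\). Hypothesis~\ref{hyp:interaction}(iii) gives \(|\mathbf F_{\nu_t}-\mathbf F_{\mu_t}|\le\eta\int f(\rho(z,w))\,\theta(dz,dw)\) for any \(\theta\in\Pi(\mu_t,\nu_t)\). The key choice is \(\theta=\mathcal L(X_t^\delta,Y_t^\delta)\), which yields \(\eta\,\mathbb Ef(r_t)\) rather than \(\eta W_1\). This is what keeps the rate at \(C_f-2\eta\) instead of losing a factor \(1/c_0\).

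Using Lemma~\ref{lem:cost-defect} and \(0<f'\le1\), then taking expectations, gives
\[
u'(t)\le-(C_f-2\eta)u(t)+e(\delta),\qquad u(t)=\mathbb Ef(r_t).
\]
The stochastic integral has bounded integrand, and the cut locus is handled by Remark~\ref{rem:mixed-diagonal}. Grönwall's inequality then yields \eqref{eq:equilibrium-coordinate-contraction}.

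For \eqref{eq:equilibrium-Wf-contraction}, note that \(W_f(\mu_t,\nu_t)\le\mathbb Ef(r_t)\) and the marginals do not depend on \(\delta\). Let \(\delta\downarrow0\), using \(e(\delta)\to0\), and then take the infimum over \(\pi\in\Pi(\mu_0,\nu_0)\).

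The main obstacle is justifying the self-referential step: the bound on \(A_t\) involves \(\mathbb Ef(r_t)\), which is itself the quantity being estimated. This is legitimate only because the drifts are deterministic functions of the already-known flows \(\mu,\nu\), so the pair process is a well-defined coupled frozen system and \(\mathbb Ef(r_t)\) is finite (\(f(r)\le r\) together with the second moments). Writing the expectation inequality in integral form and applying Grönwall handles this; integrability of \(u\) is the only point to check.
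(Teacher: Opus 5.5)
Your proposal is correct and follows essentially the same route as the paper: you use a mixed coupling of the two McKean--Vlasov solutions and Lemma~\ref{lem:cost-defect} for the curvature--confinement part. Like the paper, you use the coupling's own law $\mathcal L(X_t^\delta,Y_t^\delta)$ to bound the interaction term by $\eta f(r_t)+\eta\,\mathbb Ef(r_t)$, then apply Gr\"onwall, let $\delta\downarrow0$, and take the infimum over $\pi$. The only difference is cosmetic: the paper gets the interaction bound in one step from Hypothesis~\ref{hyp:interaction}(ii) with an independent copy $(X_t',Y_t')$, while you add and subtract $\mathbf F_{\mu_t}(Y_t)$ and use (ii) with $z=w$ together with (iii), which yields the same estimate.
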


\begin{proof}
	Fix \(\pi\in\Pi(\mu_0,\nu_0)\) and \(\delta>0\), and let
	\((X_t,Y_t)\) be the corresponding mixed coupling. Set
	\[
	r_t:=\rho(X_t,Y_t).
	\]
	Proposition~\ref{prop:mixed-dist},
	Hypothesis~\ref{hyp:geometry-confinement}\textup{(i)--(ii)}, and
	Lemma~\ref{lem:cost-defect} control the curvature and confinement
	terms by
	\[
	4f''(r_t)\phi_r^\delta(r_t)^2-D(r_t)f'(r_t)
	\leq-C_ff(r_t)+e(\delta).
	\]

	Let \((X_t',Y_t')\) be an independent copy of \((X_t,Y_t)\).
	Conditioning on \((X_t,Y_t)\) and applying
	Hypothesis~\ref{hyp:interaction}\textup{(ii)} gives
	\[
	\begin{aligned}
	&\left|
	\left\langle\mathbf F_{\nu_t}(Y_t),\gamma_t'(Y_t)\right\rangle
	-\left\langle\mathbf F_{\mu_t}(X_t),\gamma_t'(X_t)\right\rangle
	\right|\\
	&\qquad\leq
	\eta f(r_t)+\eta\,\mathbb Ef(r_t).
	\end{aligned}
	\]
	Consequently, the It\^o--Tanaka formula and \(f'\leq1\) yield
	\[
	df(r_t)
	\leq d\mathcal M_t+
	\left[-(C_f-\eta)f(r_t)
	+\eta\,\mathbb Ef(r_t)+e(\delta)\right]dt,
	\]
	where \(\mathcal M\) is a martingale. Thus
	\[
	\frac{d}{dt}\mathbb Ef(r_t)
	\leq-\lambda\mathbb Ef(r_t)+e(\delta).
	\]
	Gr\"onwall's inequality proves
	\eqref{eq:equilibrium-coordinate-contraction}. Since the marginal laws
	are independent of \(\delta\), letting \(\delta\downarrow0\) and then
	taking the infimum over \(\pi\in\Pi(\mu_0,\nu_0)\) proves
	\eqref{eq:equilibrium-Wf-contraction}.
\end{proof}

For \(\mu\in\mathcal P_2(M)\), let
\[
P_t^*\mu:=\mathcal L(\bar X_t^\mu),
\]
where \(\bar X^\mu\) is the McKean--Vlasov solution with initial law
\(\mu\). Uniqueness implies the nonlinear semigroup property
\[
P_{t+s}^*=P_t^*P_s^*.
\]

\begin{theorem}[Invariant measure and convergence]\label{thm:invariant}
	Assume Hypotheses~\ref{hyp:geometry-confinement} and
	\ref{hyp:interaction}, with \(m_V>2\eta\) and \(C_f>2\eta\), and set
	\[
	\lambda:=C_f-2\eta>0.
	\]
	Then \((P_t^*)_{t\geq0}\) admits a unique invariant measure
	\(\mu_\infty\in\mathcal P_2(M)\). Moreover, for every
	\(\nu_0\in\mathcal P_2(M)\),
	\begin{equation}
	W_f(P_t^*\nu_0,\mu_\infty)
	\leq e^{-\lambda t}W_f(\nu_0,\mu_\infty),
	\qquad t\geq0.
	\label{eq:exp-conv}
	\end{equation}
\end{theorem}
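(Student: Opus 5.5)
The plan is a Banach fixed-point argument for the time-\(t_0\) map, followed by the semigroup property. We work in the metric space \((\mathcal P_1(M),W_f)\). Since \(c_0r\le f(r)\le r\), the cost \(W_f\) is a metric equivalent to \(\mathcal W_1\), and \((\mathcal P_1(M),W_f)\) is complete. We cannot run the fixed point on all of \(\mathcal P_2(M)\) directly, because \(\mathcal W_1\)-limits need not have second moments. Instead we restrict to the closed set
\[
\mathcal K:=\Bigl\{\mu\in\mathcal P_2(M):\int_M\rho(o,z)^2\,\mu(dz)\le M_*\Bigr\},
\qquad
M_*:=\frac{C_\epsilon}{2(m_V-2\eta)-\epsilon},
\]
taken with \(p=2\) in Proposition~\ref{prop:MV-moment-control}. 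By lower semicontinuity of the second moment, \(\mathcal K\) is \(\mathcal W_1\)-closed, hence complete under \(W_f\). The first step is to show \(P_t^*\mathcal K\subseteq\mathcal K\) for all \(t\ge0\). Proposition~\ref{prop:MV-moment-control} gives the bound \(\max\{\int\rho^2d\mu_0,M_*\}\), which equals \(M_*\) when \(\mu_0\in\mathcal K\).

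\emph{Existence.} Fix \(t_0>0\). By Theorem~\ref{thm:convergence-equilibrium}, \(P_{t_0}^*\) is a strict contraction on \((\mathcal K,W_f)\) with factor \(e^{-\lambda t_0}<1\), so it has a unique fixed point \(\mu_\infty\in\mathcal K\). For any \(s>0\), the semigroup property gives \(P_{t_0}^*P_s^*\mu_\infty=P_s^*P_{t_0}^*\mu_\infty=P_s^*\mu_\infty\). Since \(P_s^*\mu_\infty\in\mathcal K\), uniqueness of the fixed point forces \(P_s^*\mu_\infty=\mu_\infty\) for all \(s\ge0\). (The same conclusion would follow from the fixed points of \(P_{t_0}^*\) and \(P_{t_0/2}^*\) coinciding.)

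\emph{Uniqueness and convergence.} Suppose \(\tilde\mu\in\mathcal P_2(M)\) is invariant. Then Theorem~\ref{thm:convergence-equilibrium} gives
\[
W_f(\tilde\mu,\mu_\infty)=W_f(P_t^*\tilde\mu,P_t^*\mu_\infty)\le e^{-\lambda t}W_f(\tilde\mu,\mu_\infty),
\]
and \(W_f(\tilde\mu,\mu_\infty)\le\mathcal W_1(\tilde\mu,\mu_\infty)<\infty\). Letting \(t\to\infty\) shows \(\tilde\mu=\mu_\infty\), so invariance alone gives uniqueness without using \(\mathcal K\). Estimate \eqref{eq:exp-conv} is then just \eqref{eq:equilibrium-Wf-contraction} applied with \(\mu_0=\mu_\infty\), using \(P_t^*\mu_\infty=\mu_\infty\).

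\emph{Main obstacle.} The delicate point is keeping the fixed-point iteration inside \(\mathcal P_2(M)\), where the semigroup and Theorem~\ref{thm:convergence-equilibrium} are defined. The invariant ball \(\mathcal K\) handles this. One must check that the constant \(C_\epsilon\) of Lemma~\ref{lem:frozen-moment} does not depend on the initial law, which the paper asserts, and that \(M_*\) is exactly the threshold making \(\mathcal K\) invariant; the latter follows from the \(\max\) in \eqref{eq:moment-ball-radius}. A secondary check is that \(P_t^*\) is well defined on \(\mathcal P_2(M)\) with law flows continuous in \(\mathcal W_1\), which is supplied by Proposition~\ref{prop:MV-moment-control}.
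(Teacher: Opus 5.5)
Your proof is correct. It reaches the result by a differently organized contraction argument than the paper's.

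The paper does not apply Banach's theorem on a fixed subset. Instead it:
\begin{enumerate}
\item follows the single orbit \(t\mapsto P_t^*\delta_o\) and uses the uniform moment bound to get \(C_o=\sup_s W_f(P_s^*\delta_o,\delta_o)<\infty\);
\item derives \(W_f(P_{t+s}^*\delta_o,P_t^*\delta_o)\le C_oe^{-\lambda t}\), so the orbit is Cauchy;
\item takes the limit \(\mu_\infty\) in the complete space \((\mathcal P_1(M),W_f)\), and only then checks \(\mu_\infty\in\mathcal P_2(M)\) by lower semicontinuity;
\item proves \(P_s^*\mu_\infty=\mu_\infty\) directly, via a triangle inequality through \(P_{t+s}^*\delta_o\) and letting \(t\to\infty\).
\end{enumerate}

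You instead use the time-\(t_0\) map on the closed, forward-invariant moment ball \(\mathcal K\), together with the commutation \(P_{t_0}^*P_s^*=P_s^*P_{t_0}^*\) and uniqueness of the fixed point.

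Your route gives a cleaner structure. The obstruction you correctly flag, that \(\mathcal P_2(M)\) is not \(\mathcal W_1\)-complete, is handled once and up front by the closedness of \(\mathcal K\). You also get the explicit bound \(\int\rho(o,\cdot)^2\,d\mu_\infty\le M_*\) for free. The price is that you use the precise form of Proposition~\ref{prop:MV-moment-control}: the bound \(\max\{\int\rho^2\,d\mu_0,M_*\}\) makes the ball of radius \(M_*\) invariant. The paper needs only a uniform-in-time bound along one orbit, and it identifies \(\mu_\infty\) concretely as the long-time limit of the flow started from \(\delta_o\).

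You should also state that \(\delta_o\in\mathcal K\), so \(\mathcal K\) is nonempty as Banach's theorem requires.

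Your uniqueness and convergence steps coincide with the paper's. Your observation that \(W_f(\tilde\mu,\mu_\infty)\le\mathcal W_1(\tilde\mu,\mu_\infty)<\infty\) makes explicit a finiteness step the paper leaves implicit.
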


\begin{proof}
	Fix \(o\in M\) and let \(\delta_o\) be the Dirac measure at \(o\).
	By Proposition~\ref{prop:MV-moment-control},
	\[
	C_o:=\sup_{s\geq0}W_f(P_s^*\delta_o,\delta_o)
	\leq\sup_{s\geq0}\mathbb E\rho(o,\bar X_s^{\delta_o})<\infty.
	\]
	Using the semigroup property and
	\eqref{eq:equilibrium-Wf-contraction}, for \(s,t\geq0\),
	\[
	\begin{aligned}
	W_f(P_{t+s}^*\delta_o,P_t^*\delta_o)
	&=W_f(P_t^*P_s^*\delta_o,P_t^*\delta_o)\\
	&\leq e^{-\lambda t}W_f(P_s^*\delta_o,\delta_o)
	\leq C_oe^{-\lambda t}.
	\end{aligned}
	\]
	Hence \((P_t^*\delta_o)_{t\geq0}\) is Cauchy in \(W_f\).
	Since \(c_0\rho\leq f(\rho)\leq\rho\), the metrics \(W_f\) and
	\(\mathcal W_1\) are equivalent, and completeness yields a probability
	measure \(\mu_\infty\) such that
	\[
	P_t^*\delta_o\longrightarrow\mu_\infty
	\quad\text{in }W_f.
	\]
	The uniform second-moment estimate and lower semicontinuity imply
	\(\mu_\infty\in\mathcal P_2(M)\).

	For fixed \(s\geq0\), the contraction estimate and semigroup property
	give
	\[
	\begin{aligned}
	W_f(P_s^*\mu_\infty,\mu_\infty)
	\leq{}&
	W_f(P_s^*\mu_\infty,P_s^*P_t^*\delta_o)
	+W_f(P_{t+s}^*\delta_o,\mu_\infty)\\
	\leq{}&
	e^{-\lambda s}W_f(\mu_\infty,P_t^*\delta_o)
	+W_f(P_{t+s}^*\delta_o,\mu_\infty).
	\end{aligned}
	\]
	Letting \(t\to\infty\) shows that
	\(P_s^*\mu_\infty=\mu_\infty\).

	For any \(\nu_0\in\mathcal P_2(M)\), invariance and
	\eqref{eq:equilibrium-Wf-contraction} give
	\[
	W_f(P_t^*\nu_0,\mu_\infty)
	=W_f(P_t^*\nu_0,P_t^*\mu_\infty)
	\leq e^{-\lambda t}W_f(\nu_0,\mu_\infty),
	\]
	which proves \eqref{eq:exp-conv}. If \(\widetilde\mu_\infty\) is another
	invariant measure in \(\mathcal P_2(M)\), then
	\[
	W_f(\mu_\infty,\widetilde\mu_\infty)
	\leq e^{-\lambda t}
	W_f(\mu_\infty,\widetilde\mu_\infty)
	\]
	for every \(t>0\), and hence the two measures coincide.
\end{proof}

\appendix

\section{A mixed Hessian estimate}
\label{app:mixed-hessian}

\begin{lemma}\label{lem:mixed-hessian-cartan}
	Let \(M\) be a Cartan--Hadamard manifold. For \(x\neq y\), set
	\(r:=\rho(x,y)\). Then
	\[
	\left|\operatorname{Hess}_{xy}\rho(X,Y)\right|
	\leq\frac{|X|\,|Y|}{r},
	\qquad
	X\in T_xM,\quad Y\in T_yM.
	\]
\end{lemma}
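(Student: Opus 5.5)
The plan is to express the mixed Hessian as a mixed second variation of length, reduce it to a boundary term of Jacobi fields, and then bound that term by a convexity (Rauch-type) argument that uses \(\operatorname{Sec}\le0\).

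\emph{Setup.} On a Cartan--Hadamard manifold there is no cut locus, so \(\rho\) is smooth on \(M\times M\) off the diagonal. Let \(\gamma:[0,r]\to M\) be the unit-speed geodesic from \(x\) to \(y\), and let \(T=\gamma'\). For \(X\in T_xM\) and \(Y\in T_yM\), consider the two-parameter family \(c(\sigma,\tau,\cdot)\) of geodesics joining \(\exp_x(\sigma X)\) to \(\exp_y(\tau Y)\). This family is smooth by uniqueness of geodesics. Then
\[
\operatorname{Hess}_{xy}\rho(X,Y)=\partial_\sigma\partial_\tau\big|_{0}\,L(c(\sigma,\tau,\cdot)).
\]
The variation fields are Jacobi fields. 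The field \(J_X=\partial_\sigma c\) satisfies \(J_X(0)=X\) and \(J_X(r)=0\), because the \(y\)-endpoint does not move in \(\sigma\). The field \(J_Y=\partial_\tau c\) satisfies \(J_Y(0)=0\) and \(J_Y(r)=Y\).

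\emph{Reduction to a boundary term.} Next I apply the standard second-variation computation to the mixed derivative. The endpoint acceleration terms \(\langle\nabla_\sigma\partial_\tau c,T\rangle\) vanish at both ends: at each endpoint one of the two parameters does not move the point, and the endpoint curves are geodesics. Decompose \(X=\langle X,T(0)\rangle T(0)+X^\perp\) and similarly for \(Y\). The tangential parts contribute nothing: changing endpoints along \(\gamma\) itself changes \(\rho\) affinely, and the mixed tangential/normal contributions drop out by the usual splitting of the first variation. Hence
\[
\operatorname{Hess}_{xy}\rho(X,Y)=I_\gamma(J_{X^\perp},J_{Y^\perp})
=\bigl[\langle\nabla_TJ_{X^\perp},J_{Y^\perp}\rangle\bigr]_0^r
=\langle\nabla_TJ_{X^\perp}(r),Y^\perp\rangle .
\]
Here the index form integral was integrated by parts, using the Jacobi equation, and \(J_{Y^\perp}(0)=0\) was used at the lower endpoint. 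It therefore suffices to show \(|\nabla_TJ(r)|\le|J(0)|/r\) for every normal Jacobi field \(J\) with \(J(r)=0\). That gives \(|\operatorname{Hess}_{xy}\rho(X,Y)|\le|X^\perp||Y^\perp|/r\le|X||Y|/r\).

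\emph{Key estimate (the main step).} Set \(u(s):=|J(s)|\). Where \(J\neq0\), the Jacobi equation and \(\operatorname{Sec}\le0\) give
\[
(u^2)''=2|J'|^2-2\langle R(J,T)T,J\rangle\ge2|J'|^2 .
\]
Combined with Cauchy--Schwarz, \(|(u^2)'|\le2u|J'|\), this yields \(uu''\ge|J'|^2-(u')^2\ge0\). So \(u\) is convex on \([0,r)\). Since \(u(r)=0\) and \(u(s)=|J'(r)|(r-s)+o(r-s)\) as \(s\uparrow r\), the left derivative is \(u'(r^-)=-|J'(r)|\). Convexity puts \(u\) above its tangent line at \(r\), so \(u(0)\ge r|J'(r)|\), which is exactly the needed bound. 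If \(J\equiv0\), the claim is trivial.

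The main care point I expect is justifying the mixed second-variation identity with the boundary terms correctly identified, and in particular the vanishing of the tangential contributions. This requires a short check that the normal and tangential parts of the Jacobi fields decouple, which holds because tangential Jacobi fields are affine multiples of \(T\). The convexity step itself is routine comparison geometry.
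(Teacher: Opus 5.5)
Your proposal is correct and follows the paper's proof. Both arguments reduce the mixed Hessian to \(\langle\nabla_TJ_{X^\perp}(r),Y^\perp\rangle\) and bound it through \(|J(0)|\ge r\,|\nabla_TJ(r)|\); the paper takes this inequality from Rauch comparison for the backward Jacobi tensor (\(|A(0)v|\ge r|v|\)), whereas you prove it directly from the convexity of \(|J|\). You should state explicitly that \(J\neq0\) on \([0,r)\). This holds because a Cartan--Hadamard manifold has no conjugate points, and it also follows from your own convexity argument applied between two consecutive zeros of \(J\).
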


\begin{proof}
	Let \(\gamma:[0,r]\to M\) be the minimizing geodesic from \(x\) to
	\(y\), and let \(X^\perp,Y^\perp\) be the components orthogonal to
	\(\dot\gamma\) at the endpoints. The radial components do not
	contribute. Let \(J_X,J_Y\) be the normal Jacobi fields satisfying
	\[
	J_X(0)=X^\perp,\quad J_X(r)=0,
	\qquad
	J_Y(0)=0,\quad J_Y(r)=Y^\perp.
	\]
	The mixed second-variation formula gives
	\[
	\operatorname{Hess}_{xy}\rho(X,Y)
	=I_\gamma(J_X,J_Y)
	=\langle D_tJ_X(r),Y^\perp\rangle.
	\]

	Let \(A\) be the backward Jacobi tensor with
	\(A(r)=0\) and \(D_tA(r)=\operatorname{Id}\). Then
	\[
	J_X(t)=A(t)A(0)^{-1}X^\perp.
	\]
	Since \(\operatorname{Sec}\leq0\), Rauch comparison yields
	\[
	|A(t)v|\geq(r-t)|v|,
	\qquad 0\leq t\leq r,
	\]
	and hence \(\|A(0)^{-1}\|\leq r^{-1}\). Therefore,
	\[
	\left|\operatorname{Hess}_{xy}\rho(X,Y)\right|
	=
	\left|\left\langle A(0)^{-1}X^\perp,Y^\perp\right\rangle\right|
	\leq\frac{|X|\,|Y|}{r}.
	\]
\end{proof}

\section{A curvature-spike example}\label{app:curvature-spikes}

We construct a model satisfying the endpoint-index hypothesis although
its Ricci curvature has no finite lower bound, and then verify the
assumptions used in Sections~\ref{sec:well-posedness}--\ref{sec:equilibrium}.

\subsection{The metric and the endpoint-index bound}

Choose \(\psi\in C_c^\infty((-1,1))\) with
\(0\leq\psi\leq1\) and \(\psi(0)=1\), and define
\[
R_j:=2^j,\qquad A_j:=2^{4j},\qquad
\delta_j:=2^{-10j},\qquad
q(r):=\sum_{j\geq1}A_j
\psi\left(\frac{r-R_j}{\delta_j}\right).
\]
Let \(a\) solve
\[
a''=qa,\qquad a(0)=0,\qquad a'(0)=1,
\]
and equip \(\mathbb R^2\) with
\[
g_\Sigma=dr^2+a(r)^2d\theta^2.
\]
Because \(q\) vanishes near the origin, \(a(r)=r\) there, so the metric
extends smoothly across the pole.

\begin{proposition}[Curvature spikes and geodesic occupation]
\label{prop:spike-class}
The surface \((\Sigma,g_\Sigma)\) is complete and Cartan--Hadamard, and
\[
\mathcal K_\Sigma(r)=-q(r)\leq0,
\qquad
\inf_{\substack{x\in\Sigma\\ |v|_x=1}}
\operatorname{Ric}_x(v,v)=-\infty.
\]
There exist \(C_a,C_1<\infty\) such that
\[
r\leq a(r)\leq C_ar,\qquad 1\leq a'(r)\leq C_1.
\]
Moreover, every unit-speed geodesic segment \(\gamma\) of length \(s\)
satisfies
\begin{equation}
\int_\gamma q(r(\gamma(u)))\,du
\le C\min\{s^{1/9},1\}.
\label{eq:spike-occupation}
\end{equation}
\end{proposition}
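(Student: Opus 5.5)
The plan is to work entirely in geodesic polar coordinates $(r,\theta)$ about the pole. For $g_\Sigma=dr^2+a(r)^2d\theta^2$ the Gaussian curvature is $-a''/a=-q$, which gives $\mathcal K_\Sigma=-q\le0$.

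\emph{Bounds on $a$.} Since $a''=qa$ with $a(0)=0$, $a'(0)=1$, $q\ge0$, the function $a$ is convex with $a'\ge1$, hence $a(r)\ge r>0$ for $r>0$. For the upper bound I would set $m(r):=\sup_{[0,r]}a'$, so that $a\le m(r)\,r$ on $[0,r]$. The supports $\{|r-R_j|<\delta_j\}$ are pairwise disjoint, and $a'$ is constant between them. Crossing the $j$-th support therefore gives
\[
a'(R_j+\delta_j)\le a'(R_j-\delta_j)\bigl(1+2A_j\delta_j(R_j+1)\bigr)\le a'(R_j-\delta_j)\bigl(1+C2^{-5j}\bigr),
\]
since $A_j\delta_jR_j=2^{-5j}$. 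The product $\prod_j(1+C2^{-5j})$ converges, which gives $a'\le C_1$ and $a\le C_1r$.

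\emph{Completeness, Cartan--Hadamard, and unbounded Ricci.} $r$ is the distance to the pole and the closed $r$-balls are compact, so Hopf--Rinow gives completeness. The surface is simply connected ($\mathbb R^2$), complete, and has $\mathcal K\le0$, so it is Cartan--Hadamard. On a surface $\operatorname{Ric}(v,v)=\mathcal K|v|^2$, and at $r=R_j$ we have $q(R_j)=A_j\psi(0)=2^{4j}\to\infty$, so $\inf\operatorname{Ric}=-\infty$.

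\emph{Occupation estimate.} Set $h(u):=r(\gamma(u))$. On a rotationally symmetric surface $\operatorname{Hess}r=\frac{a'}{a}\,(g-dr\otimes dr)$, so
\[
h''=\frac{a'(h)}{a(h)}\bigl(1-h'^2\bigr)\ge0 .
\]
Thus $h$ is convex, and each band $B_j:=\{|r-R_j|<\delta_j\}$ is visited during at most two time intervals. Because $q$ vanishes off $\bigcup_jB_j$ and $q\le A_j$ on $B_j$, it suffices to bound the time $T_j$ the geodesic spends in $B_j$. Split the visiting times into two kinds.
\begin{itemize}
\item Where $|h'|\ge\frac12$, the function $h$ is monotone and crosses a window of width $2\delta_j$, so these times have total length at most $8\delta_j$.
\item Where $|h'|<\frac12$, we have $h''\ge\frac34\,a'/a\ge c/R_j$, using $a\le C_1r$. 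This set is an interval (by convexity). A function with $h''\ge k$ on an interval of length $L$ oscillates by at least $kL^2/8$, while $h$ stays in a window of width $2\delta_j$. Hence $L\le C\sqrt{\delta_jR_j}$.
\end{itemize}
Therefore $T_j\le C\min\{s,\,2^{-9j/2}\}$, and
\[
\int_\gamma q\,du\le\sum_jA_jT_j\le C\sum_j\min\{2^{4j}s,\;2^{-j/2}\}.
\]
This sum is bounded by $C\sum_j2^{-j/2}$, giving the bound $C$. For $s<1$, split at $2^{j}\approx s^{-2/9}$:
\[
\sum_{2^j<s^{-2/9}}2^{4j}s\lesssim s\cdot s^{-8/9}=s^{1/9},
\qquad
\sum_{2^j\ge s^{-2/9}}2^{-j/2}\lesssim s^{1/9}.
\]
Together these give \eqref{eq:spike-occupation}.

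The step I expect to be the main obstacle is the near-tangential grazing of a band, where $|h'|$ is small and the geodesic can linger near $r=R_j$. This is exactly where the lower bound $h''\gtrsim1/R_j$, which depends on the upper bound $a\le C_ar$, has to be used; it is why $a'\le C_1$ must be established first.
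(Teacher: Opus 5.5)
Your proof is correct. For the bounds on $a$, completeness, and the curvature claims it is essentially the paper's argument: your band-by-band product is a discrete form of the paper's Volterra--Gr\"onwall estimate. One small point there: the bound $a\le m(r)\,r$ directly gives the crossing inequality with $a'(R_j+\delta_j)$ on the right, so after absorbing you get the factor $1/(1-2A_j\delta_j(R_j+1))$. That factor is still $\le 1+C2^{-5j}$, which is all you use.

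The occupation estimate, however, follows a genuinely different route. The paper uses Clairaut's first integral $\ell=a(r)^2\dot\theta$, $\dot r^2=1-\ell^2/a(r)^2$, locates the unique turning radius $r_*$ with $a(r_*)=|\ell|$, and integrates the explicit bound $du/dr\le C\sqrt{r/(r-r_*)}$ across $S_j$. The integrable singularity at $r_*$ then yields $C\sqrt{R_j\delta_j}$ in one computation. You instead use only the second-order relation $h''=\frac{a'(h)}{a(h)}(1-h'^2)$ for $h=r\circ\gamma$. You split into a transversal regime, which costs time at most $8\delta_j$, and a grazing regime handled by the oscillation lemma. There $\sqrt{R_j\delta_j}$ appears as the time a curve with $h''\gtrsim 1/R_j$ can stay in a slab of width $2\delta_j$.

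The paper's computation is shorter and exploits the rotational symmetry fully. Yours never uses a first integral, so it carries over essentially unchanged to any Cartan--Hadamard manifold whose curvature is confined to thin shells about a pole, since Hessian comparison already gives $\operatorname{Hess}r\ge\frac1r(g-dr\otimes dr)$.

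For the same reason, your closing remark is inaccurate. Convexity of $a$ with $a(0)=0$ gives $a(r)\le r\,a'(r)$, hence $a'/a\ge 1/r$ and $h''\ge\frac{3}{4h}$ on the grazing set. This needs no upper bound $a\le C_a r$; it is the paper's Clairaut bound that uses it.

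Finally, the oscillation lemma should be applied to each of the at most two components of $\{|h'|<\frac12\}\cap h^{-1}(B_j)$, not to $\{|h'|<\frac12\}$ itself. Only on those components do both the lower bound on $h''$ and the window constraint hold.
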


\begin{proof}
Since \(a''=qa\geq0\), we have \(a'\geq1\) and \(a\geq r\). The
Volterra equation gives
\[
\frac{a(r)}r
\leq1+\int_0^r u q(u)\frac{a(u)}u\,du,
\qquad
\int_0^\infty u q(u)\,du
\leq3\sum_{j\geq1}R_jA_j\delta_j<\infty.
\]
Gr\"onwall's inequality yields \(a(r)\leq C_ar\), and
\[
a'(r)=1+\int_0^r q(u)a(u)\,du
\leq1+C_a\int_0^\infty u q(u)\,du=:C_1.
\]
The radial metric coefficient is one, so every curve escaping to
infinity has infinite length; hence \(\Sigma\) is complete. Moreover,
\(\mathcal K_\Sigma=-a''/a=-q\leq0\), and
\(q(R_j)=A_j\to\infty\). Thus \(\Sigma\) is Cartan--Hadamard and its
Ricci curvature has no finite lower bound.

It remains to prove \eqref{eq:spike-occupation}. Along a complete
unit-speed geodesic, Clairaut's relation gives
\[
\ell=a(r)^2\dot\theta,\qquad
\dot r^2=1-\frac{\ell^2}{a(r)^2}.
\]
The radial coordinate has at most one turning point \(r_*\), and each
radial shell is therefore crossed on at most two monotone branches.
It is determined by \(a(r_*)=|\ell|\). On either branch,
\[
\frac{du}{dr}
=
\frac{a(r)}
{\sqrt{(a(r)-a(r_*))(a(r)+a(r_*))}}
\leq C\sqrt{\frac r{r-r_*}},
\]
because \(a(r)-a(r_*)\geq r-r_*\),
\(a(r)+a(r_*)\geq r\), and \(a(r)\leq C_ar\).
Therefore, for \(S_j=[R_j-\delta_j,R_j+\delta_j]\),
\[
\operatorname{length}(\gamma\cap S_j)
\leq C\sqrt{R_j\delta_j},
\qquad
\int_{\gamma\cap S_j}q\,du
\leq CA_j\sqrt{R_j\delta_j}=C2^{-j/2}.
\]
The trivial bound \(\int_{\gamma\cap S_j}q\,du\leq A_js\) also gives
\[
\int_\gamma q\,du
\leq C\sum_{j\geq1}\min\{2^{4j}s,2^{-j/2}\}.
\]
For \(0<s\leq1\), choose \(J\) so that
\(2^{9J/2}\leq s^{-1}<2^{9(J+1)/2}\). Then
\[
\sum_{j\leq J}2^{4j}s\leq Cs^{1/9},
\qquad
\sum_{j>J}2^{-j/2}\leq Cs^{1/9}.
\]
For \(s\geq1\), the sum is uniformly bounded. This proves
\eqref{eq:spike-occupation}.
\end{proof}

\begin{corollary}\label{cor:spike-index}
For every minimizing geodesic \(\gamma\) of length \(r\) on \(\Sigma\),
\[
\mathcal I(\gamma)\le H_\Sigma(r),
\qquad
H_\Sigma(r):=C\min\{r^{1/9},1\}.
\]
For every \(n\geq2\), the same bound holds on
\(M_n:=\Sigma\times\mathbb R^{n-2}\), although its Ricci curvature has
no finite lower bound.
\end{corollary}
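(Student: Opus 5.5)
To bound \(\mathcal I(\gamma)\) I will use the variational characterization of the index form rather than the Jacobi fields themselves. Since \(\Sigma\) is Cartan--Hadamard, there are no conjugate points. Hence each Jacobi field \(J_a\) with \(J_a(0)=E_a(0)\), \(J_a(r)=E_a(r)\) minimizes \(I_\gamma(V,V)\) among all fields \(V\) with these boundary values. Taking \(V=E_a\) itself, the parallel unit normal field, gives
\[
I_\gamma(J_a,J_a)\le I_\gamma(E_a,E_a)=-\int_0^r\langle R(E_a,\gamma')\gamma',E_a\rangle\,du.
\]
Summing over \(a\) yields \(\mathcal I(\gamma)\le-\int_0^r\operatorname{Ric}(\gamma',\gamma')\,du\).

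On the surface \(\Sigma\) there is one normal direction, and \(\operatorname{Ric}(\gamma',\gamma')=\mathcal K_\Sigma=-q(r(\gamma(u)))\). Therefore \(\mathcal I(\gamma)\le\int_\gamma q\,du\), and \eqref{eq:spike-occupation} of Proposition~\ref{prop:spike-class} gives \(\mathcal I(\gamma)\le C\min\{r^{1/9},1\}=H_\Sigma(r)\). This \(H_\Sigma\) is continuous, nondecreasing and vanishes at \(0\), as Hypothesis~\ref{hyp:geometry-confinement}(ii) requires.

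For \(M_n=\Sigma\times\mathbb R^{n-2}\) with the product metric, the sectional curvatures are nonpositive, so \(M_n\) is Cartan--Hadamard and the same minimization argument applies. A minimizing geodesic is \(\gamma(u)=(\sigma(\alpha u),\,x_0+\beta u v)\), with \(\alpha^2+\beta^2=1\), \(\sigma\) a unit-speed geodesic in \(\Sigma\), and \(v\) a unit vector in \(\mathbb R^{n-2}\).

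Write \(\gamma'=(\alpha\sigma',\beta v)\). The curvature tensor lives on the \(\Sigma\) factor only, so
\[
\operatorname{Ric}(\gamma',\gamma')=\alpha^2\mathcal K_\Sigma(\sigma)=-\alpha^2 q.
\]
This gives
\[
\mathcal I(\gamma)\le\alpha^2\int_0^r q(\sigma(\alpha u))\,du=\alpha\int_{\sigma} q\,ds,
\]
where the integral is over the \(\Sigma\)-geodesic segment \(\sigma\) of length \(\alpha r\). Applying \eqref{eq:spike-occupation} to \(\sigma\) gives \(\alpha C\min\{(\alpha r)^{1/9},1\}\le C\min\{r^{1/9},1\}\) since \(\alpha\le1\). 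The case \(\alpha=0\) is trivial.

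The Ricci curvature of \(M_n\) in the direction \((\partial_r,0)\) at \(r=R_j\) equals \(-A_j\to-\infty\), so \(M_n\) has no finite Ricci lower bound.

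The main obstacle is justifying the minimization inequality \(I(J_a,J_a)\le I(E_a,E_a)\). It rests on the absence of conjugate points along \(\gamma\), which holds for minimizing geodesics off the cut locus and here globally by \(\operatorname{Sec}\le0\). It is the standard index lemma, and I will cite it from \cite{Petersen2016}. A secondary point to check is that \eqref{eq:spike-occupation} applies to the projected segment \(\sigma\) even when \(\sigma\) is not minimizing. This is fine because Proposition~\ref{prop:spike-class} covers every unit-speed geodesic segment.
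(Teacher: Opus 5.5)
Your proposal is correct and follows essentially the same route as the paper. In both, the index lemma with parallel normal fields as comparison fields gives \(\mathcal I(\gamma)\le-\int_0^r\operatorname{Ric}(\dot\gamma,\dot\gamma)\,ds\), which reduces to \(\int_\gamma q\,du\) on \(\Sigma\) and to \(c\int_0^{cr}q(\sigma(u))\,du\) on \(M_n\), and the argument is finished by \eqref{eq:spike-occupation} and \(c\le1\); your explicit check that \(\operatorname{Ric}=-A_j\) in the direction \((\partial_r,0)\) at \(r=R_j\) just spells out a point the paper leaves implicit.
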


\begin{proof}
On \(\Sigma\), let \(E\) be the parallel unit normal field along
\(\gamma\). The index lemma and \(\mathcal K_\Sigma=-q\) give
\[
\mathcal I(\gamma)
=I_\gamma(J,J)
\leq I_\gamma(E,E)
=\int_\gamma q\,du
\leq H_\Sigma(r).
\]

For a unit-speed geodesic in \(M_n\), let \(c\in[0,1]\) be the constant
speed of its \(\Sigma\)-component. If \(c>0\), write that component as
\(\sigma(cs)\), where \(\sigma\) is unit speed. Using parallel comparison
fields in the product and then Proposition~\ref{prop:spike-class},
\[
\begin{aligned}
\mathcal I(\gamma)
&\leq-\int_0^r\operatorname{Ric}(\dot\gamma,\dot\gamma)\,ds\\
&=c^2\int_0^r q(\sigma(cs))\,ds\\
&=c\int_0^{cr}q(\sigma(u))\,du
\leq cH_\Sigma(cr)\leq H_\Sigma(r).
\end{aligned}
\]
The case \(c=0\) is immediate.
\end{proof}

\subsection{Compatible confinement and interaction potentials}

Let \(o\) be the pole of \(M_n\) and set
\[
V(x):=\frac{\alpha}{2}\rho(o,x)^2,
\qquad \alpha>0.
\]
Since \(M_n\) is Cartan--Hadamard, Hessian comparison gives
\[
\operatorname{Hess}
\left(\frac12\rho(o,\cdot)^2\right)\geq g.
\]
Thus Hypothesis~\ref{hyp:geometry-confinement}\textup{(i)} holds with
\[
C_V\equiv\alpha,\qquad m_V=\alpha,\qquad M_V=0.
\]
Together with Corollary~\ref{cor:spike-index}, this gives
\[
D(r)=\alpha r-C\min\{r^{1/9},1\},
\qquad
\lim_{r\to\infty}\frac{D(r)}r=\alpha>0.
\]
Let
\[
W_\varepsilon(x,y)
=\frac{\varepsilon}{2}\{h(x)-h(y)\}^2,
\qquad h\in C_c^\infty(M_n).
\]
Write
\[
B_0:=\|h\|_\infty,\qquad
B_1:=\|\nabla h\|_\infty,\qquad
B_2:=\|\operatorname{Hess}h\|_\infty,
\qquad
L_h:=B_1^2+2B_0B_2.
\]
Then
\[
\nabla_g^{(1)}W_\varepsilon(x,y)
=\varepsilon\{h(x)-h(y)\}\nabla h(x),
\qquad
\nabla_g^{(1)}W_\varepsilon(x,x)=0.
\]
Differentiation in the first variable and the mean-value estimate in
the second give
\[
\left|\nabla_x\nabla_g^{(1)}W_\varepsilon(x,y)\right|
\leq|\varepsilon|L_h,
\qquad
\left|
\nabla_g^{(1)}W_\varepsilon(x,z)
-\nabla_g^{(1)}W_\varepsilon(x,w)
\right|
\leq|\varepsilon|B_1^2\rho(z,w).
\]
Consequently, along the unique minimizing geodesic from \(x\) to \(y\),
\[
\left|
\nabla_g^{(1)}W_\varepsilon(x,z)
-P_{y\to x}\nabla_g^{(1)}W_\varepsilon(y,w)
\right|
\leq|\varepsilon|L_h
\bigl(\rho(x,y)+\rho(z,w)\bigr).
\]
Since \(c_0\rho\leq f(\rho)\), Hypothesis~\ref{hyp:interaction}
holds with
\[
\eta_\varepsilon:=\frac{|\varepsilon|L_h}{c_0}.
\]

It remains to verify the one-point radial condition in
Hypothesis~\ref{hyp:geometry-confinement}\textup{(iii)}.
Write \(o=(o_\Sigma,0)\), \(x=(x_\Sigma,z)\), and set
\[
r_\Sigma:=\rho_\Sigma(o_\Sigma,x_\Sigma),
\qquad
R:=\rho_{M_n}(o,x),
\qquad
Y(x):=R^2=r_\Sigma^2+|z|^2.
\]
Using Proposition~\ref{prop:spike-class},
\[
\Delta_{M_n}Y
=
2+2r_\Sigma\frac{a'(r_\Sigma)}{a(r_\Sigma)}+2(n-2)
\leq2+2C_1+2(n-2)=:C_\Delta,
\]
because \(a(r_\Sigma)\geq r_\Sigma\) and
\(a'(r_\Sigma)\leq C_1\). Since \(Y=R^2\) and
\(|\nabla R|=1\) away from \(o\),
\[
R\Delta_{M_n}R
=\frac{\Delta_{M_n}Y-2}{2}
\leq\frac{C_\Delta-2}{2}.
\]
Thus Hypothesis~\ref{hyp:geometry-confinement}\textup{(iii)} holds with
\[
L_0:=\frac{C_\Delta-2}{2},
\qquad
L_1:=0.
\]

Since
\[
\eta_\varepsilon=\frac{|\varepsilon|L_h}{c_0}\longrightarrow0
\qquad\text{as }\varepsilon\to0,
\]
choose
\[
|\varepsilon|
<\frac{c_0}{2L_h}\min\{\alpha,C_f\}.
\]
Then \(m_V=\alpha>2\eta_\varepsilon\) and
\(C_f>2\eta_\varepsilon\). Hence the model satisfies all the assumptions
of Proposition~\ref{prop:MV-moment-control},
Corollary~\ref{cor:particle-moment},
Theorem~\ref{thm:propagation-chaos}, and the results of
Section~\ref{sec:equilibrium}, despite having no finite Ricci lower
bound.

		\bibliographystyle{plain}
	\bibliography{manuscript_arxiv_20260911}
\end{document}